\documentclass[a4paper,11pt,DIV=9]{scrartcl}
\usepackage[T1]{fontenc}
\usepackage{multicol}
\usepackage{amsmath}
\usepackage{amssymb}
\addtokomafont{disposition}{\rmfamily\normalfont\scshape}
\usepackage{mathtools}
\usepackage[osf,sc]{mathpazo}
\usepackage{relsize}
\usepackage{amsthm}
\usepackage{enumerate}
\usepackage[autostyle=true]{csquotes}
\usepackage{verbatim}
\usepackage{tikz}
\usepackage{tikz-cd}
\usepackage{microtype}
\usepackage{pdfpages}
\usepackage[british]{babel}
\usepackage[hidelinks,unicode]{hyperref}
\linespread{1.06}
\recalctypearea

\usepackage[T1]{fontenc}
\usepackage{microtype}
\usepackage{lscape}
\usepackage{etoolbox}
\usepackage{adjustbox}
\usepackage{graphicx}
\usepackage{booktabs} 
\usepackage{comment}
\usepackage{csquotes}
\usepackage{algorithm}
\usepackage{algorithmic}
\usepackage{comment}
\usepackage{thmtools}
\usepackage{thm-restate}
\usepackage{caption}
\usepackage{subcaption}
\usepackage{url}
\usepackage{placeins}
\captionsetup{format = plain}
\usepackage{natbib}

\usepackage{amsmath}
\usepackage{amssymb}
\usepackage{mathtools}
\usepackage{amsthm}
\usepackage{siunitx}
\sisetup{
	mode=match,
	reset-text-family = false ,
	text-family-to-math = false ,
	text-series-to-math = false,
	propagate-math-font = true,
	reset-math-version = false
}
\usepackage{cleveref}

\newtheorem{theorem}{Theorem}[section]
\newtheorem*{theorem*}{Theorem}

\theoremstyle{definition}
\newtheorem{definition}[theorem]{Definition}

\theoremstyle{remark}
\newtheorem{remark}[theorem]{Remark}

\usepackage[textsize=tiny]{todonotes}

\newtoggle{arxiv}
\togglefalse{arxiv}
\newcommand{\PerD}{\mathbf{P}}
\newcommand{\expP}{\mathbb{E}\mathbf{P}}
\newcommand{\mshift}{\mathbf{MS}}

\newcommand{\death}{\mathbf{D}}

\newcommand{\NIPH}{\textsmaller{NIPH}}

\newcommand{\TDA}{\textsmaller{TDA}}
\newcommand{\PH}{\textsmaller{PH}}
\newcommand{\VR}{\textsmaller{VR}}

\newcommand{\R}{\mathbb{R}}

\DeclareMathOperator{\diag}{diag}

\newcommand\michael[1]{\noindent{\textcolor{magenta}{[MTS: #1]}}}
\newcommand\vincent[1]{\noindent{\textcolor{magenta}{[VPG: #1]}}}
\renewcommand\vincent[1]{}\renewcommand\michael[1]{}

\title{Non-isotropic Persistent Homology: \\ \smaller{Leveraging the Metric Dependency of \textsmaller{PH}}
}
\author{Vincent P Grande\footnote{\textsmaller{RWTH} Aachen University, \textsmaller{VPG} and \textsmaller{MTS} acknowledge funding by the German Research Council (\textsmaller{DFG}) within Research Training Group 2236 (UnRAVeL).}~~and Michael T Schaub\footnote{\textsmaller{RWTH} Aachen University, \textsmaller{MTS} acknowledges partial funding by the Ministry of Culture and Science (\textsmaller{MKW}) of the German State of North Rhine-Westphalia ("\textsmaller{NRW} R\"uckkehrprogramm") and the European Union (\textsmaller{ERC}, \textsmaller{HIGH-HOPeS}, 101039827). Views and opinions expressed are however those of the author(s) only and do not necessarily reflect those of the European Union or the European Research Council Executive Agency. Neither the European Union nor the granting authority can be held responsible for them.}}
\date{\vspace{-5ex}}
\begin{document}
	\maketitle
	\begin{abstract}
		Persistent Homology is a widely used topological data analysis tool that creates a concise description of the topological properties of a point cloud based on a specified filtration.
		Most filtrations used for persistent homology depend (implicitly) on a chosen metric, which is typically agnostically chosen as the standard Euclidean metric on $\mathbb{R}^n$.
		Recent work has tried to uncover the \enquote{true} metric on the point cloud using distance-to-measure functions, in order to obtain more meaningful persistent homology results.
		Here we propose an alternative look at this problem:
		we posit that information on the point cloud is lost when restricting persistent homology to a single (correct) distance function.
		Instead, we show how by varying the distance function on the underlying space and analysing the corresponding shifts in the persistence diagrams, we can extract additional topological and geometrical information.
		Finally, we numerically show that non-isotropic persistent homology can extract information on orientation, orientational variance, and scaling of randomly generated point clouds with good accuracy and conduct some experiments on real-world data.
	\end{abstract}
	
	\begin{figure*}[ht!]
		\begin{center}
			\includegraphics[width=\textwidth]{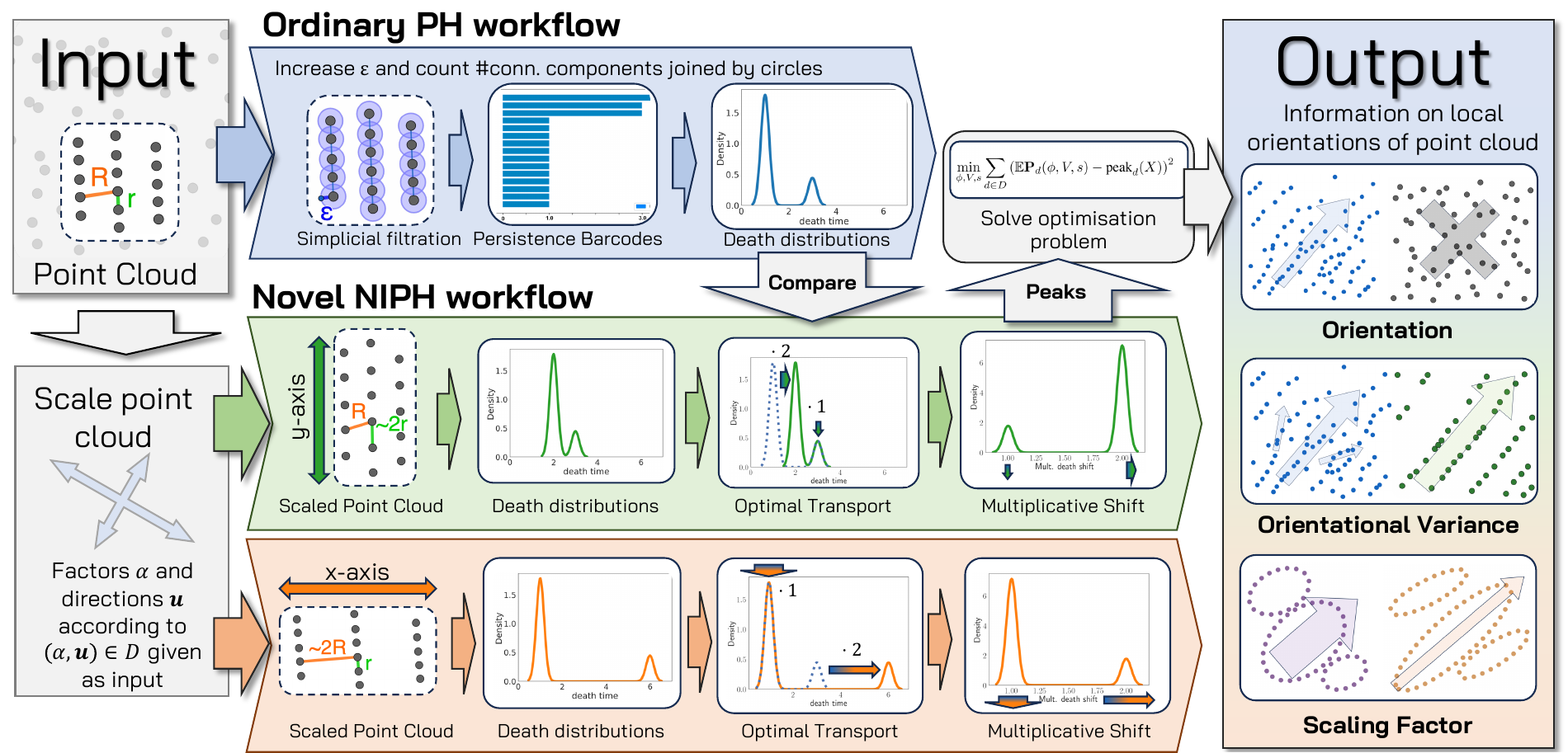}
			\caption{\textbf{Schematic of Non-Isotropic Persistent Homology (\NIPH{}).} \\
				\textbf{Step~1.}
				Produce different versions of the input point cloud by applying directional scaling in direction $\mathbf{u}$ and scaling-factor $s$ according to elements in input set $D$.
				\textbf{Step~2.} Compute \PH{} of desired degree (the diagram displays 0\textsuperscript{th} persistent homology) for input point cloud and each of the scaled point clouds.
				\textbf{ Step~3.} Compute optimal transport between weighted death time distributions of input point cloud and each of the scaled point clouds.
				\textbf{ Step~4.} Compute multiplicative factor of shift for each death time.
				Extract maxima and use optimisation problem to compute preferred orientations, scaling-factor, and orientational variance of point cloud, as seen in the illustrations on the right.
			}
			\label{fig:fig1}
		\end{center}
	\end{figure*}
	\section{Introduction}
	Over the last decades, topological data analysis (\TDA{}) has proven to provide a valuable toolkit for extracting information out of complex data sets.
	Most notably, persistent homology (\PH{}) provides a straight-forward way to extract topological information across different scales from a point cloud.
	The resulting persistence diagrams and persistence barcodes form a metric space and have been used for many interesting applications, see for example \cite{Edelsbrunner2008}.
	An important motivation for our work is to combine the robust topological descriptors \PH{} provides of (point-cloud) data with more refined geometric notions of orientation and preferred dimensions.
	
	In order to form the simplicial filtration used for computing \PH{}, we need to specify a distance function on the point cloud, which is typically chosen as a metric on $\R^n$; most often the standard $\ell_2$ metric on $\R^n$ is used.
	As in topology virtually all reasonable metrics on $\R^n$ are equivalent and induce the same topology, it would seem that the choice of metric for \PH{} is irrelevant.
	However, this is not true:
	because the metric controls the birth-time of simplices, changing the metric alters the birth and death times of the topological features of the constructed simplicial complexes.
	A change in metric may even eradicate certain topological features or introduce new topological features.
	The role of the underlying metric for \PH{} is often not further investigated, even though it has been acknowledged in recent work such as \cite{chazal2011geometric, Anai2020}, although from a different view.
	However, this influence is treated by the authors as a problem that they attempt to fix by introducing an improved Euclidean distance that makes \PH{} more robust to outliers.
	Stated differently, the aim is to construct a single metric that leads to the ``best possible'' simplicial filtration that provides an accurate topological description of the underlying space (point cloud).
	
	\begin{figure}[b!]
		\begin{center}
			\begin{subfigure}{0.46\columnwidth}
				\centerline{\includegraphics[width=\columnwidth]{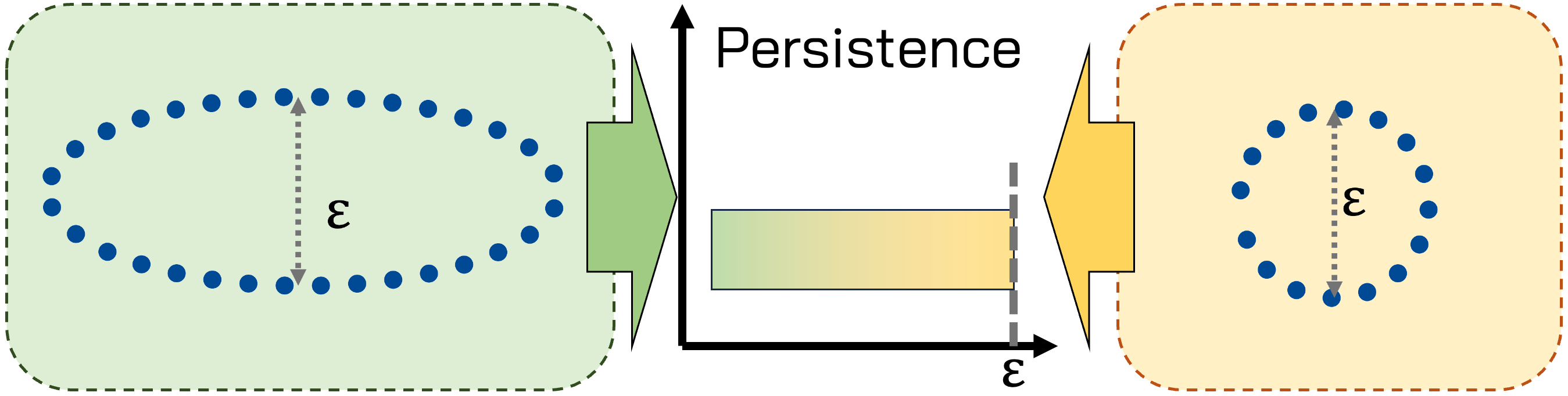}}
			\end{subfigure}
			\begin{subfigure}{0.06\columnwidth}
				\centerline{}
			\end{subfigure}
			\begin{subfigure}{0.46\columnwidth}
				\centerline{\includegraphics[width=\columnwidth]{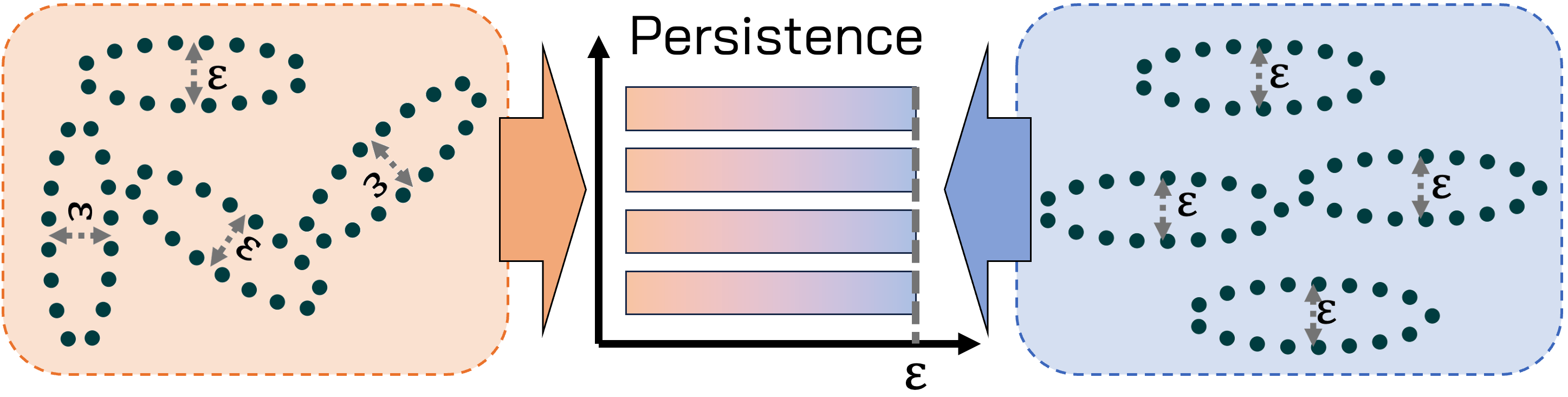}}
			\end{subfigure}
			\caption{\textbf{Data features not captured by standard \PH{}.} \emph{Left:} Persistent homology will not distinguish circles and ellipses by their death time $\varepsilon$. \emph{Right:} Persistent homology cannot distinguish orientations and orientational variances of the data set by death times $\varepsilon$.}
			\label{fig:problem1}
		\end{center}
	\end{figure}
	
	In this work, we take a different perspective on the influence of the choice of a metric on \PH{}.
	Rather than trying to find the best possible metric, we ask: how does \PH{} change as we vary the metric, and can we exploit the induced changes in \PH{} to extract additional information from the data?
	We name this approach non-isotropic persistent homology (\NIPH{})\,---\,see~\Cref{fig:fig1}. 
	Note that \NIPH{} not only provides us with a means to assess how robust (sensitive) the results of \PH{} are to changes in the metric.
	In addition, non-isometric persistent homology (\NIPH{}) harvests the rich information of how the persistence diagram of a point cloud changes when we change the underlying metric, to extract new information that is not apparent from any single \PH{} analysis (See \Cref{fig:problem1}).
	This may be seen as analogous to cases in physics, where taking the derivative of one important physical quantity often yields another physical quantity of interest.

	As a concrete example, the notion of a preferred orientation is perhaps the most intuitive quantity that is sensitive to, and therefore measurable by a change in the metric\,---\,certain coordinate rescalings will show virtually no effect, whereas others will strongly alter the \PH{} diagram.
	Indeed, orientations and preferred direction are omnipresent in data:
	Animal biologists study the intricate patterns of coordinated swarm behaviour of insects, birds, and marine life \citep{schneider1995}.
	Soft and condensed matter physicist study the patterns and emerging orientations in molecules shifting between solid and fluid state \citep{marder2010}.
	Microbiologists are interested in the patterns and directions viral \textsmaller{RNA} infiltrates cells and macrophages come to the rescue \citep{steenblock2021}.
	Judging based on this prevalence of notions of orientations and direction in applications, making persistent homology sensitive to orientations presents itself to be a fruitful endeavour.
	
	From an abstract point of view, a set of points $X$ only carries a geometric meaning in conjunction with a metric $d$.
	The metric encodes information on the distance between individual pairs of points.
	Taken together, this then determines the overall geometry of the space.
	Importantly, the metric determines the topology of the underlying space as well.
	From a different perspective, this means that we can \emph{change} the geometry of a space by \emph{changing} the metric of the space.
	Making use of this simple, but powerful insight is the key idea behind \NIPH.
	
	%
	%
	
	\paragraph{Organisation of the paper} 
	In \Cref{sec:Background}, we will give an overview of some of the core concepts of \NIPH{}, including Simplicial Complexes, the Vietoris--Rips filtration, Persistent Homology, and Optimal Transport.
	In \Cref{sec:methods}, we give an overview over the \NIPH{} algorithm.
	We provide an introductory example in \Cref{subsec:IntroductoryExample}, and the general method in \Cref{subsec:GeneralMethod}, highlight further approaches using the same framework in \Cref{subsec:AdditionalApproaches}, and finally give theoretical guarantees in \Cref{subsec:TheoreticalGuarantees}.
	Finally, we validate the performance of \NIPH{} in experiments on synthetic and real-world data in \Cref{sec:Experiments}.
	The code to replicate all experiments in this paper can be found here \url{https://git.rwth-aachen.de/netsci/publication-2023-non-isotropic-persistent-homology}.
	
	\paragraph{Related Work}
	\cite{chazal2011geometric} introduced distance-to-measure filtrations to adapt the distance function (or the simplicial filtration) of the point cloud to reduce the effect of outliers (cf. \cite{Anai2020}).
	There has also been work on extracting geometric information using persistent homology, such as the persistent homology transform \cite{Turner2014}.
	However, this work focussed on extracting shapes of $2d$ surfaces and $3d$ objects in $3d$ space, whereas we focus on local geometric information encoded in point clouds.
	In \cite{Hofer2019} and \cite{Carriere2021}, the authors introduce the notion of differentiating persistent homology diagrams.
	However, their goal is to utilize this differentiability to make \PH{} accessible to machine learning tasks, rather than to extract geometrical information on the point cloud.
	In \cite{Stucki2023}, the authors consider the problem of matching persistence diagrams as well, but for persistence diagrams from the same metric space.
	
	\section{Theoretical Background}
	\label{sec:Background}
	Our method of Non-Isotropic Persistent Homology draws from many ideas of Algebraic Topology, Topological Data Analysis, Computational Geometry, and Transport Theory.
	We give a brief overview over the main concepts needed for \NIPH{}, but direct the interested reader to the more comprehensive introductions to Algebraic Topology \citep{Bredon:1993,Hatcher:2002}, Topological Data Analysis \citep{Chazal2021}, Computational Geometry \citep{Preparata2012}, and Optimal Transport \citep{Peyre2019}.
	\paragraph{Simplicial Complexes}
	Simplicial Complexes are higher-order generalisations of graphs, originally constructed in algebraic topology to capture the shapes (homotopy types) of topological spaces.
	\begin{definition}[Simplicial Complex]
		A simplicial complex $X$ consists of a set of vertices $V$ and a set of finite non-empty subsets of $V$, called the simplices $S$, such that \textbf{(i)} $S$ is closed under taking non-empty subsets and \textbf{(ii)} for every $v\in V$ the singleton set $\{v\}$ is contained in $S$. We call the simplices with $k+1$ elements the $k$-simplices.
	\end{definition}
	A graph can be considered as a simplicial complex with the nodes being the $0$-simplices and the edges being the $1$-simplices, connecting two nodes.
	Intuitively, $2$-simplices are represented by triangles between three nodes, $3$-simplices are tetrahedra between four nodes, and so on.
	\paragraph{Vietoris--Rips filtration}
	The Vietoris--Rips (\VR{}) filtration is a tool from computational topology that turns a point cloud into a filtration of simplicial complexes.
	\begin{definition}[Vietoris--Rips Complex]
		Given a point cloud $X$ with distance function $d$ and a parameter $r\ge 0$, the associated simplicial complex $\text{VR}_r(X)=(X,S)$ is given by the vertex set $X$ and the set of simplices
		\[
		S=\left\{\sigma\subset X : \max_{x,y\in \sigma} d(x,y)\le r\right\}.
		\]
	\end{definition}
	For $r\le r'$, we then have the canonical inclusion $\text{VR}_r(X)\subset\text{VR}_{r'}(X)$.
	Intuitively, we can construct $\text{VR}_r(X)$ by connecting all points with a distance of at most $r$ with edges, and finally adding all available higher-order simplices (i.e. the ones where all edges between vertices are already included).
	\paragraph{Persistent Homology}
	Homology is a tool from algebraic topology that describes the \enquote{shape} of a topological space.
	From a perspective of computational topology, homology groups have two major advantages over the homotopy groups, another popular concept of algebraic topology.
	Firstly, the linear algebraic definition of homology groups makes them easily accessible for computational methods \citep{Bauer2021}.
	Secondly, the theory of homology groups is well understood, whereas the computation of higher homotopy groups of even the most basic non-trivial topological spaces, spheres, is an active research topic today (See for example \cite{Ravenel2023}).
	
	Intuitively, $n$-dimensional homology encodes $n$-dimensional holes in the topological space.
	$0$-dimensional holes are connected components, $1$-dimensional holes are loops, $2$-dimensional holes are cavities in $3d$-space and so on.
	
	\begin{adjustbox}{valign=b, minipage=0.65 \linewidth}
		Given a point cloud $X$, persistent homology keeps track of these homological features across all stages of the associated \VR{} filtrations.
		Persistent homology summarises these findings in a persistence diagram that, for each connected component/loop/cavity/etc./ $F$, stores $r$ and $r'$ such that the generalised loop $F$ first occurred in the step $\text{VR}_r(X)$ of the \VR{} filtration and last occurred in $\text{VR}_{r'}(X)$.
		(Note: for $0$-dimensional features, the birth time $r$ is always $0$.)
		For loops for instance, the birth time marks the time the loop is closed, whereas the death time $r'$ marks the filtration step where the loop is filled by higher-order simplices.
		We use the implementation by \cite{gudhi:urm}.
	\end{adjustbox}
	\hfill%
	\begin{adjustbox}{valign=b, minipage=0.28\linewidth}
		\includegraphics[width=\linewidth]{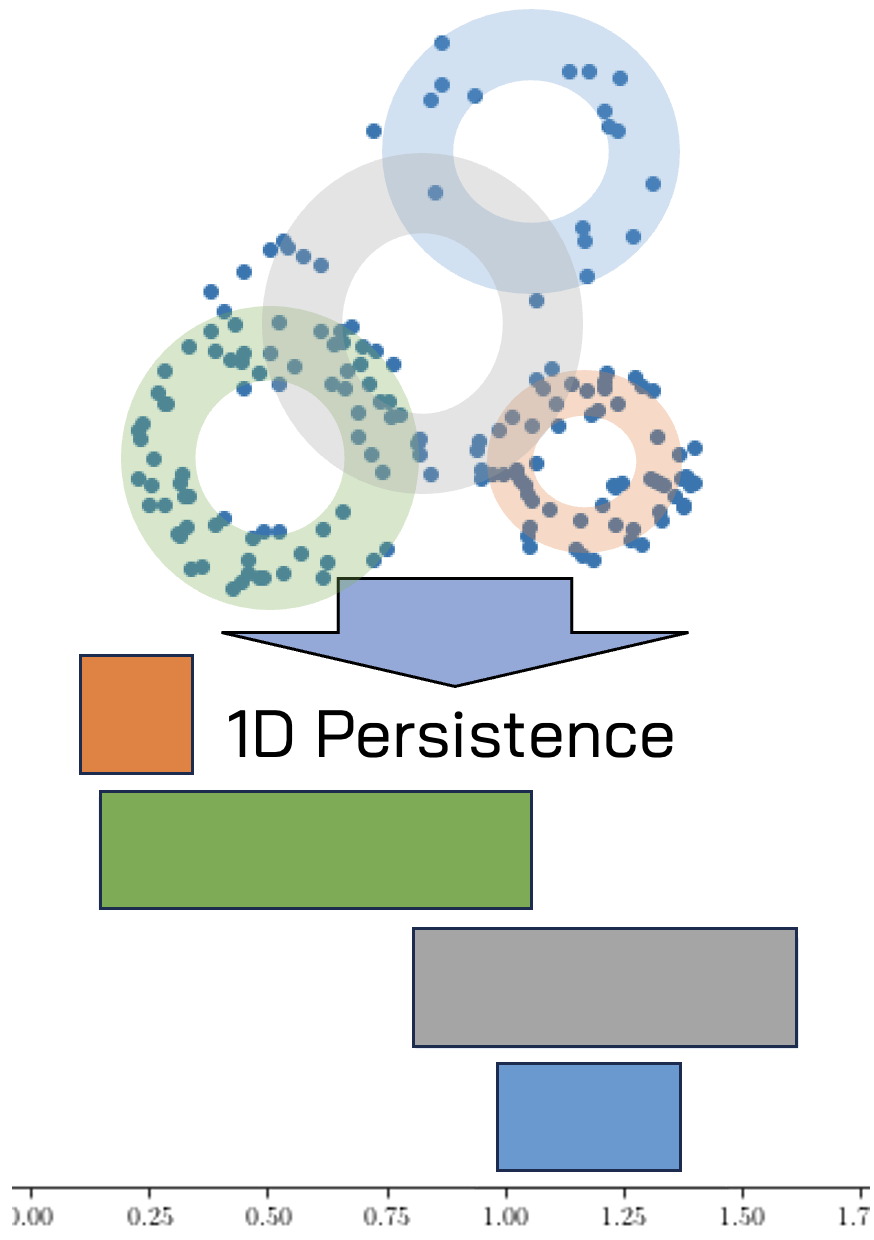}
	\end{adjustbox}
	
	
	
	%
	
	\paragraph{Optimal Transport}
	At its core, optimal transport is an optimisation problem.
	Intuitively, the most basic (``earth movers'') formulation asks for the \enquote{cheapest} way to move one arrangement of mass to another, desired arrangement.
	More formally we have the following:
	
	\begin{definition}[Optimal transport]
		Given two measurable spaces $A$ and $B$, a distribution $\mu$ on $A$, a distribution $\nu$ on $B$, and a joint cost function $c\colon A\times B\rightarrow \R_{\ge 0}$, optimal transport looks for a joint distribution $\pi$ on $A\times B$ with marginal distributions $\mu$ and $\nu$ that minimises
		\[
		C(\pi)=\int_{A\times B} c(a,b)d\pi+r(\pi)
		\]
		over the space of all such distributions where $r$ is some regularisation function (often an entropic regularisation). $\pi$ is then called the \emph{transport plan} and $C(\pi)$ the \emph{transport distance}.
	\end{definition}
	Optimal transport has seen widespread application in the wider \textsmaller{ML} community: \cite{rubner2000} in Computer Vision, \cite{kolouri2020} in graph learning, \cite{kandasamy2018} for neural architectures, \cite{gramfort2015} in neuroimaging, etc.
	In \TDA{}, optimal transport has long been used to compare homological information across spaces.
	This has been done using the $\ell_\infty$ bottleneck distance \cite{kerber2017} on the level of persistence diagrams, or more directly on the level of persistence landscapes \cite{Bubenik2015}.
	
	\section{Methods}
	\begin{figure}[tb!]
		\begin{center}
			\begin{subfigure}{0.32\columnwidth}
				\includegraphics[width=\columnwidth]{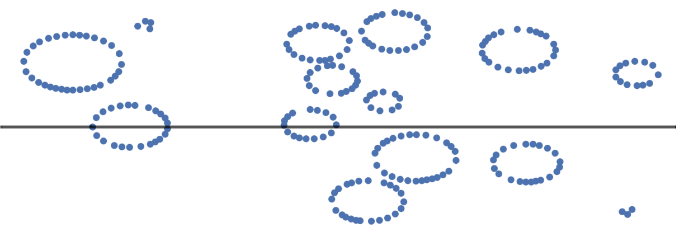}
				\subcaption{$0^\circ$}
			\end{subfigure}
			\begin{subfigure}{0.32\columnwidth}
				\includegraphics[width=\columnwidth]{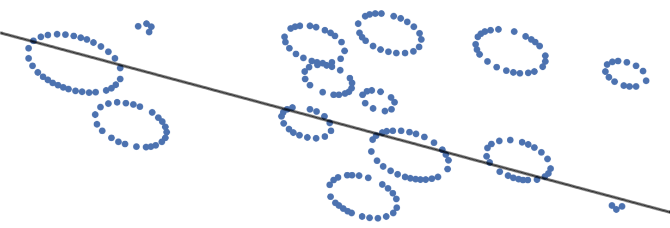}
				\subcaption{$15^\circ$}
			\end{subfigure}
			\begin{subfigure}{0.32\columnwidth}
				\includegraphics[width=\columnwidth]{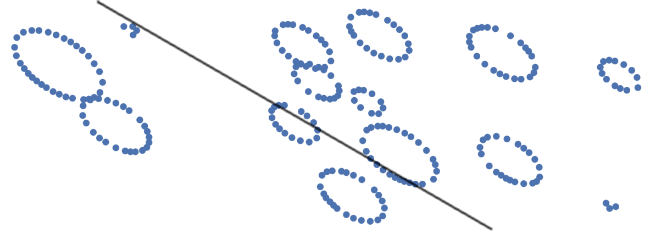}
				\subcaption{$30^\circ$}
			\end{subfigure}
			\begin{subfigure}{0.32\columnwidth}
				\includegraphics[width=\columnwidth]{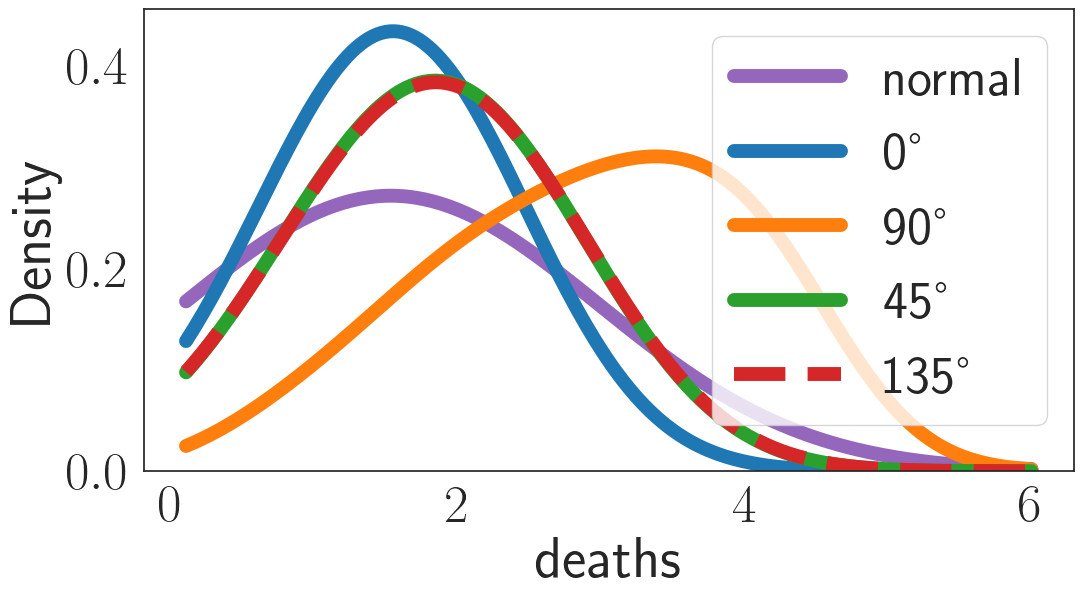}
			\end{subfigure}
			\begin{subfigure}{0.32\columnwidth}
				\includegraphics[width=\columnwidth]{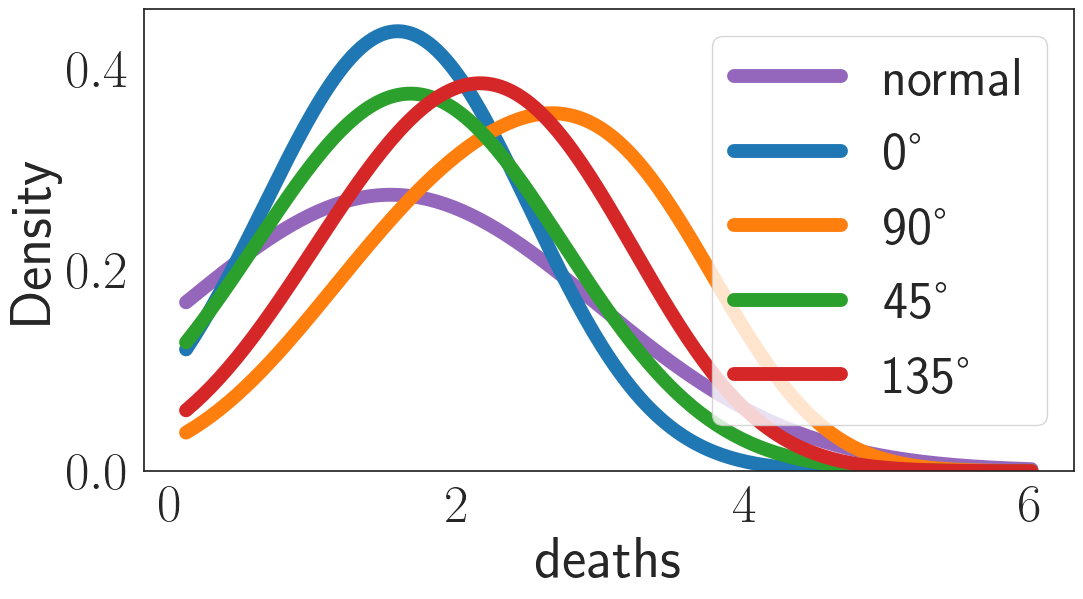}
			\end{subfigure}
			\begin{subfigure}{0.32\columnwidth}
				\includegraphics[width=\columnwidth]{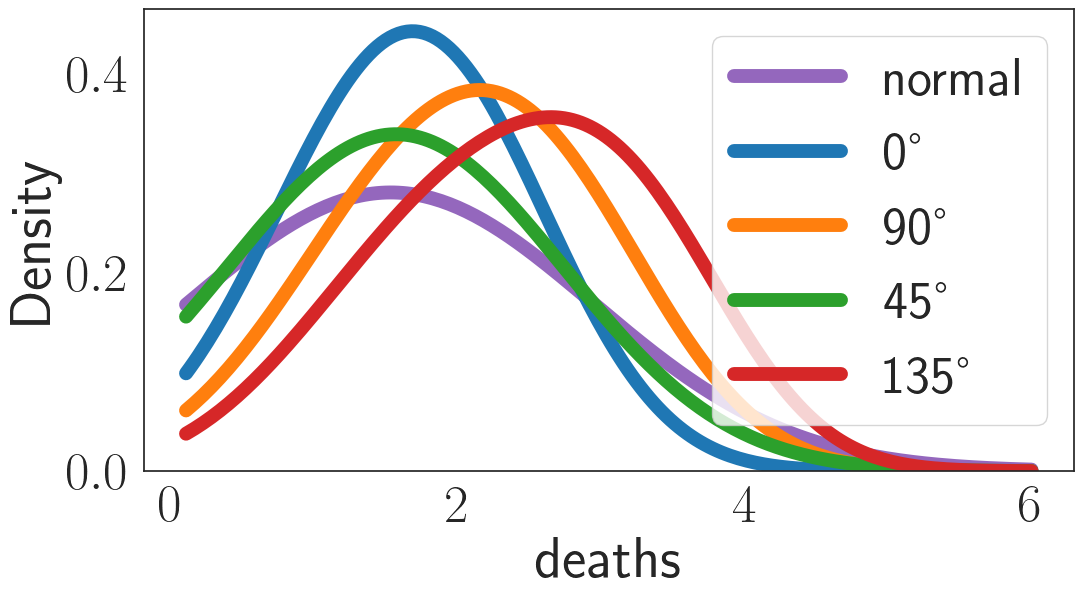}
			\end{subfigure} 
			\begin{subfigure}{0.32\columnwidth}
				\includegraphics[width=\columnwidth]{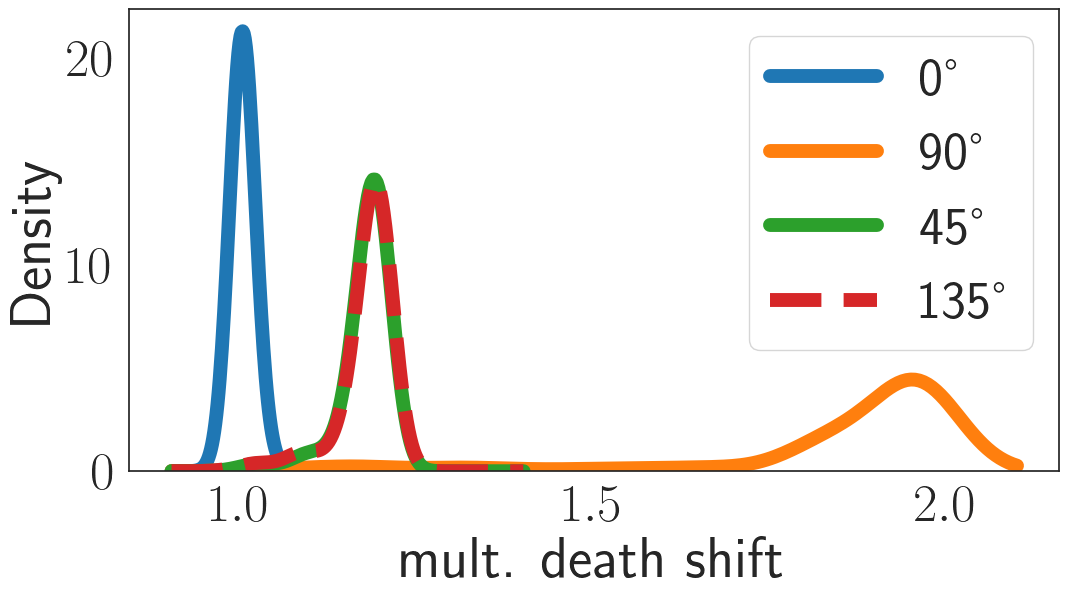}
			\end{subfigure}
			\begin{subfigure}{0.32\columnwidth}
				\includegraphics[width=\columnwidth]{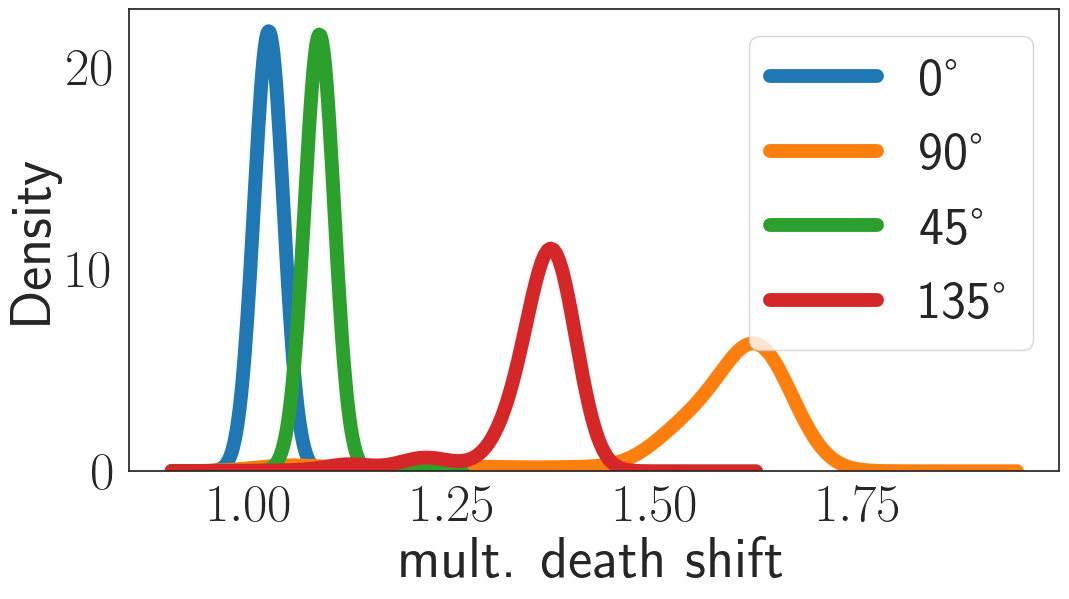}
			\end{subfigure}
			\begin{subfigure}{0.32\columnwidth}
				\includegraphics[width=\columnwidth]{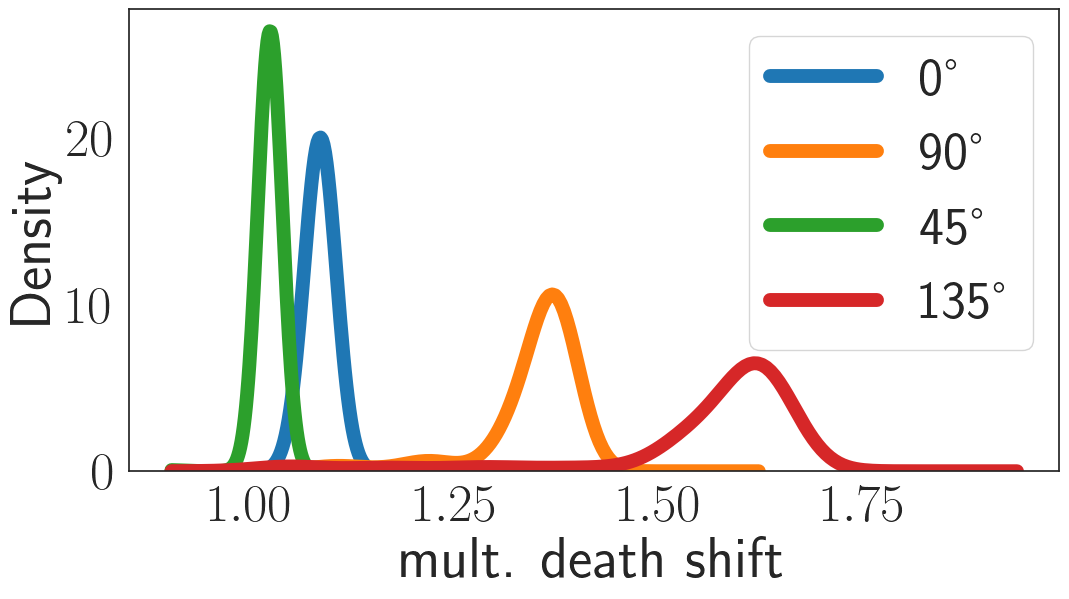}
			\end{subfigure}
			\caption{\textbf{Illustrative Experiments.} Point cloud with ellipses of various radii in orientation $\varphi = 0^\circ$, $15^\circ$ and $30^\circ$, scaling factor $s=2$ and orientational variance $V=0$. \emph{Top:} Part of point cloud. \emph{Middle:} Death density diagram of $1$-dimensional persistent homology. \emph{Bottom:} Multiplicative shift diagram used to extract information on orientation, orientational variance, and scaling.
				In the left diagram, the blue curve represents scaling parallel to the directional scaling of the ellipses.
				Thus there is no change in the death times and the peak of the curve is at $\sim 1.0$.
				The orange curve represents scaling in a direction almost orthogonal to the scaling of the data points, hence the death times are multiplied by the factor of the scaling.
				This is represented by the peak almost reaching $2.0$.
				The red and green curve represent scaling roughly at a direction of $45^\circ$ to the original scaling in the point cloud.
				When we change the orientation of the point cloud the orange peak will move to the left and the the red peak will move to the right.
				The mult.\ death shift diagrams are easy to interpret and concise.
			}
			\label{fig:deathshifts}
			\label{fig:SynthExpSketch}
		\end{center}
	\end{figure}
	\label{sec:methods}
	\subsection{Introductory Example}
	\label{subsec:IntroductoryExample}
	As an introductory example, we can consider the following class of metrics on $\R^2$:
	\begin{definition}
		\label{def:introduction}
		For real $\alpha,\beta > 0$, we define the associated metric $d_{\alpha,\beta}$ on $\R^2$ as follows:
		\[
		\smash{d_{\alpha,\beta}\colon \left( (x_1,y_1)^\top,(x_2,y_2)^\top\right)\mapsto \sqrt{\alpha\left(x_1-x_2\right)^2+\beta \left( y_1-y_2\right)^2}},
		\]
	\end{definition}
	which is the standard Euclidean distance for $\alpha = \beta =1$.
	Picking $\alpha$ and $\beta$ amounts to (de-)prioritising the $x$- and $y$-axis for our metric.
	When choosing $\alpha\gg\beta$, the distance between two points is almost entirely determined by the distance of their $x$-values.
	When choosing $\beta\gg\alpha$, the distance between two points is almost entirely determined by the distance of their $y$-values, and differences in the $x$-coordinate have almost no influence.
	
	To build some intuition, assume that we compute ordinary $1$-dimensional \PH{} with $\alpha=\beta=1$ first and compare it  with \PH{} associated to $d_{\alpha,\beta}$ for $\alpha =0.5$ and $\beta=1$.
	On a data set consisting of points sampled from circles with radius $\varepsilon/2$, the death times would remain almost constant:
	although points are now farther apart in the $x$-axis, simplices with edge length of $\varepsilon$ are enough to cover all circles in the $y$-direction.
	On the other hand, if the data set consisted of ellipses stretched by a factor of $2$ along the $y$-axis the persistent homology would change with the change in metric.
	Because the change of the metric is orthogonal to the orientation of the ellipses, their death times will double (\Cref{fig:problem1}).
	
	This simple example motivates why looking at the change of \PH{} under different metrics can be a powerful tool to enrich the standard tools of persistent homology with notions of orientation and preferred directions.
	We give a more rigorous account of our mathematical models below.
	
	\subsection{General Method}
	\label{subsec:GeneralMethod}
	\NIPH{} takes as input a point cloud $X\in\R^n$ and a set of scaling directions (see~\Cref{alg:NIMPH}), and computes a mult.\ shift diagram, from which a variety of interesting properties can be computed.
	For instance, \NIPH{} can extract from this the presence or absence of a \textbf{preferred orientation}, \textbf{orientational variance}, and \textbf{scaling factor} (as shown on the \emph{right}):
	
	\begin{adjustbox}{valign=b, minipage=0.65 \linewidth}
		\textbf{Orientation.}
		Two point clouds with similar $0$-dimensional \PH{}.
		\emph{Left:} The point cloud has additional structure: a preferred orientation.
		\emph{Right:} There is no preferred orientation in the point cloud.
		\NIPH{} can detect the differences in the amount of orientation in both point clouds across arbitrary scales.
		
		\textbf{Orientational variance.} Two point clouds with similarly structured $0$-dimensional \PH. Both have additional structure and a preferred dimension.
		\emph{Left:} There is a comparably large variance in the local preferred direction.
		\emph{Right:} The alignment of the point data with the preferred direction is very strong.
		\NIPH{} can detect the differences in the amount of variance of orientation in both point clouds.
		
	\end{adjustbox}
	\hfill%
	\begin{adjustbox}{valign=b, minipage=0.25\linewidth}
		\includegraphics[width=\linewidth]{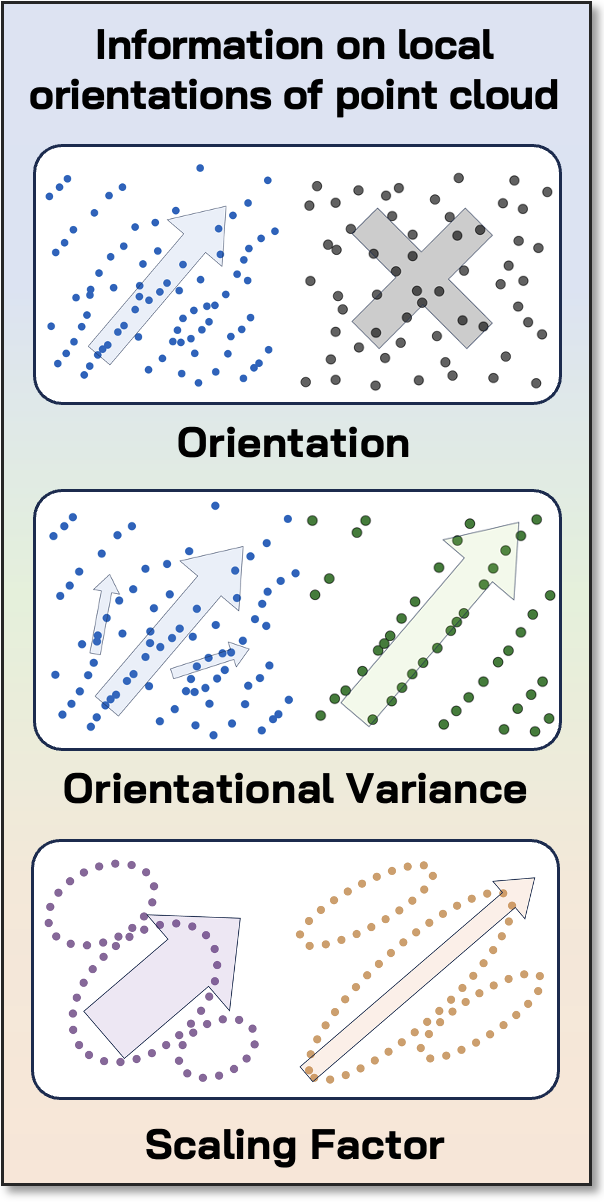}
	\end{adjustbox}
	\textbf{Scaling factor.}
	Two point clouds with similarly structured $1$-dimensional \PH.
	Both have additional structure and are scaled along the same direction.
	The ellipses of the \emph{left} point cloud are scaled differently than on the \emph{right}.
	\paragraph{The distance function}
	For every direction of an $n$-dimensional space given by a unit vector $\mathbf{u}$ we can pick an orthonormal basis with first basis vector $\mathbf{u}$. Let $U$ be the associated base transformation matrix. For a scaling-factor $\alpha>0$ we can define the scaling function
	$
	S_{\mathbf{u},\alpha}\colon x\mapsto U^{-1}\diag((\alpha,1,\dots,1))Ux.
	$
	\begin{definition}
		We compute the distance function $d_{\mathbf{u},\alpha}$ associated to the pair $(\mathbf{u},\alpha)$ as follows:
		\[
		\smash{d_{\mathbf{u},\alpha}\colon (x_1,x_2)\mapsto \|S_{\mathbf{u},\alpha}(x_1)-S_{\mathbf{u},\alpha}(x_2)\|_2.}
		\]
	\end{definition}
	We note that this definition generalises \cref{def:introduction} for $\beta =1$ given in the introduction and does not depend on the concrete choice of $U$.
	Intuitively, $d_{\mathbf{u},\alpha}$ scales the distances in the direction of $\mathbf{u}$ by a factor of $\alpha$ while leaving the orthogonal directions untouched.
	Setting $\alpha = 0$ would amount to the distance obtained by projecting on the subspace orthogonal to $\mathbf{u}$, which would possibly violate the $d(x,y)=0\Leftrightarrow x=y$ condition on a metric.
	This is why we have excluded $\alpha =0$.
	\begin{definition}
		Let $X$ be a point cloud in $n$-dimensional space $X\subset \R^n$, $\mathbf{u}\in \R^n$ a unit vector and $\alpha>0$ a real scaling-factor. We denote by $X_{\mathbf{u},\alpha}$ the scaled data-set
		$
		\smash{X_{\mathbf{u},\alpha}:= \{S_{\mathbf{u},\alpha}(x)\mid x\in X\}.}
		$
	\end{definition}
	It is easy to see that $(X_{\mathbf{u},\alpha},d)$ is isometric to $(X,d_{\mathbf{u},\alpha})$ where $d$ is the Euclidean distance on $\R^n$.
	With this observation in mind, we refer to the metric spaces $(X,d_{\mathbf{u},\alpha})$ as the scaled point clouds.
	\paragraph{Computing persistence diagrams}
	We compute the persistence diagrams $\PerD$ and $\PerD_{\mathbf{u},\alpha}$ on the point clouds $X$ and $X_{\mathbf{u},\alpha}:=(X,d_{\mathbf{u},\alpha})$ for all $(\mathbf{u},\alpha)\in D$ using a Vietoris-Rips filtration on $X$ and $X_{\mathbf{u},\alpha}$.
	\paragraph{Death distributions}
	In many cases, the death of homology classes carries more structural information than their birth time.
	This is because birth time is mainly controlled by point densities, whereas death times measure distances between clusters or the sizes of loops.
	Hence we transform the persistence diagrams to density plots over the death time.
	In the case of $1$-dimensional homology, we can weigh the points according to the difference or quotient of death and birth time, i.e., longer lived loops carry more weight.
	We denote by $\death$ and $\death^{(\mathbf{u},\alpha)}$ the (weighted) vectors of death times.
	E.g., for a grid with grid length $a$ the $0$-dim death distribution $\death$ would just contain entries of $a$.
	\paragraph{Computing the multiplicative shift}
	
	We are interested in how persistent homology and the death distributions change when changing the metric of the underlying data set.
	Hence we propose to compute an optimal transport based matching between the death distribution diagrams of $X$ and of the $X_{\mathbf{u},\alpha}$.
	We can then compute for every point in the original persistence diagram the factor by which the death time was multiplied in the optimal transport matching.
	We can then present these multiplicative death shift factors as another density plot (See \Cref{fig:deathshifts}):
	\begin{definition}[Multiplicative Death Shift]
		Let $T^{(\mathbf{u},\alpha)}$ denote the solution transport matrix of optimal transport between $\death$ and $\death^{(\mathbf{u},\alpha)}$.
		We denote by the $i$-th multiplicative death shift $\mathbf{ms}_i^{\mathbf{u},\alpha}$ associated to scaling ${\mathbf{u},\alpha}$ for $1\le i \le |\PerD|$ the following expression
		\[
		\mathbf{ms}_i^{\mathbf{u},\alpha}(X) := \exp \left(\sum_j T^{\mathbf{u},\alpha}_{i,j}(X)\ln \death^{\mathbf{u},\alpha}_j(X)/\death_i(X) \right).
		\]
		In the most simple case of an $1:1$ transport plan $T$, where $t^{\mathbf{u},\alpha}(i)$ denotes the destination of $i$, this reduces to the much simpler form of
		\[
		\mathbf{ms}_i^{\mathbf{u},\alpha}(X) =\death^{\mathbf{u},\alpha}_{t^{\mathbf{u},\alpha}(i)}/\death_i(X).
		\]
		The multiplicative shift is a scale-free measure of how much the persistence of a homology class is affected by the scaling of the point cloud.
		Now let $w$ denote a vector of weights for the homological features of $X$ in \PH{}.
		Then we denote by $\mshift_{\mathbf{u},\alpha}$ the multiplicative death shift diagram, given by the density diagram
		\[
		\mshift_{\mathbf{u},\alpha}(X) := \operatorname{density}\left\{(\mathbf{ms}_i^{\mathbf{u},\alpha}(X),w_i)\mid 1\le i \le |\PerD|\right\}.
		\]
		$\mshift_{\mathbf{u},\alpha}(X)$ is basically the combination of all the individual $\mathbf{ms}_i^{\mathbf{u},\alpha}(X)$.
		We furthermore denote the $x$-value of the maximum of $\mshift_{\mathbf{u},\alpha}(X)$ by $\text{peak}_{\mathbf{u},\alpha}(X)$, the most prominent mult.\ shift value.
	\end{definition}
	\paragraph{Extracting the orientations}
	The multiplicative death shift density diagrams obtained in the previous step are an interesting object in their own right.
	To illustrate the utility of these diagrams, we give an example of extracting information on orientations ($\varphi$), orientational variance $V$, and scaling factor $s$ from these shift diagrams.
	The idea behind this is that we can compute how the multiplicative shift diagrams should behave under given parameters $(\varphi,V,\alpha)$.
	We do this using the model of rectangles (closely matching the behaviour of ellipses, but having a concise analytical formulation) for $1$-dimensional homology and grids for $0$-dimensional homology.
	\begin{definition}[Expected Peak]
		Let $\expP_{\mathbf{u},\alpha}(\varphi, V, s)$ be a function such that for a point cloud $Y$ sampled from rectangles (a grid) with scaling factor $s$, orientation $\varphi$ and orientational variance $V$ we have that $\mshift_{\mathbf{u},\alpha}(Y)$ in $1$-dimensional ($0$-dimensional) persistent homology takes it maximum value at $\expP_{\mathbf{u},\alpha}(\varphi, V, s)$.
	\end{definition}
	$\expP$ has an analytic description depending on the considered homology dimension, but can be determined via sampling as well (See \Cref{sec:AnalyticDescription}).
	For $D$ being the set of sampling directions and scalings, \NIPH{} solves the optimisation problem
	\[
	\min_{\varphi,V,s} \sum_{d\in D} \left(\expP_d(\varphi, V, s)-\text{peak}_{d}(X)\right)^2
	\]
	to obtain an estimated orientation $\varphi$, orientational variance $V$, and scaling factor $\alpha$ of the point cloud $X$ explaining the witnessed multiplicative shift diagrams best.
	We provide an in-depth discussion of choices of sampling directions and sampling scaling factors $\mathbf{u}$ in practice in \Cref{sec:SelectionProbingDirections}.
	\begin{algorithm}[tb]
		\caption{Non-isotropic persistent homology (\NIPH{})}
		\label{alg:NIMPH}
		\begin{algorithmic}
			\STATE {\bfseries Input:} Point cloud $X$, list of directions and scale-factors $D$
			\STATE Compute persistent homology diagram $\PerD$ of $X$
			\FOR{$d\in D$}
			\STATE Compute scaled data set $X_d$ according to $d=(\mathbf{u},\alpha)$.
			\STATE Compute persistent homology $\PerD_d$ of $X_d$.
			\STATE Solve optimal transport from $\PerD$ to $\PerD_d$.
			\STATE Compute mult. shift diagram $\mshift_d$ associated to $d$
			\ENDFOR
			\STATE Post-Processing, e.g.\ solve optimisation problem for best orientation $\varphi$, scaling-factor $s$, and orientational variance $V$ matching the maxima of $\mshift_d$.
			\STATE \textbf{Output:} $\varphi$, $s$, $V$
		\end{algorithmic}
	\end{algorithm}
	
	\subsection{Additional approaches}
	\label{subsec:AdditionalApproaches}
	The underlying idea of \NIPH{}\,---\,to leverage the metric dependency of persistent homology\,---\, allows for great flexibility and extensions beyond orientations, orientational variance and scaling as discussed above.
	We highlight two additional approaches to extract information from point clouds using the \NIPH{} framework.
	\paragraph{Exotic metrics and outlier detection}
	The family of metrics we considered so far was based on re-weighing the influence of different directions of our ambient space $\R^d$.
	However, we can construct more intricate families of metrics for specific tasks.
	\begin{definition}[Outlier metric, cf. \cite{Anai2020}]
		Let $X\subset \R^d$ be a finite point cloud and $\mu\colon X\rightarrow \R_{>0}$ a function. We then define a metric $d_\mu$ on $X$ by
		$d_\mu(x,y)=2d(x,y)/(\mu(x)+\mu(y)).
		$
	\end{definition}
	In the previous definition, for a $\delta>0 $, we can define $f$ to be
	\[
	f_\delta(x)=\sum_{y\not = x, y\in X}\exp (-d(x,y)^2/\delta)
	\]
	and then set $\mu_\delta(x)=f_\delta(x)\cdot|X|/\sum_{y\in X} f_\delta(y)$.
	For $\delta\rightarrow 0$, this returns the original metric, whereas increasing $\delta$ will increasingly make outliers further apart and decrease the distance in dense parts of the point cloud.
	Adopting this metric and tracking the persistence and multiplicative shift diagrams gives insight into the outlier structure of the base point cloud $X$.
	
	\paragraph{Tracking the orientation of death simplices}
	Persistent Homology equips us with yet another tool:
	For every persistence class, we also get the death simplex, i.e.\ the simplex that connects the connected components, closes the loop, etc.
	We can then look at the orientation $\varphi_\sigma$ of the longest edge of the death simplex $\sigma$. 
	By considering the entire distribution of orientations, we can again infer information on the geometry of the point cloud.
	However, limiting this analysis to the snapshot of the base point cloud with standard metric is prone to noise:
	Even a circle in one-dimensional homology, despite having no preferred orientation, will have a death edge with an orientation.
	Hence we propose to apply the \NIPH{} framework to the analysis of death simplices:
	By tracking the distributions of death edge orientations over different metrics on the point cloud, we can distinguish noise from real orientations, and obtain the underlying scaling factor of the orientations for free.
	\subsection{Theoretical Guarantees}
	\label{subsec:TheoreticalGuarantees}
	\begin{adjustbox}{valign=b, minipage=0.73 \linewidth}
		\begin{restatable}[Theoretical guarantees]{theorem}{TheoremGuarantees}
			\label{thm:cleangrid}
			Given points on an orthogonal $n_1\times n_2$ grid with $n_1,n_2>1$ in $\R^2$ rotated by $\varphi$ with distances $d_1$ and $d_2$ with $d_1<d_2$.
			Applying $0$-dimensional unweighted \NIPH{} with scaling factor $s\le d_2/d_1$ in direction $\psi$ will then yield a multiplicative shift diagram with a peak at $s_1:=\sqrt{(s^2-1)\cos^2 (\psi-\varphi)+1}$ with weight $(n_1-1)n_2$ and a second peak at $s_2:=\sqrt{(s^2-1)\smash{\sin}^2 (\psi-\varphi)+1}$ with weight $n_2-1$.
			In particular, when scaling in direction $\varphi$ the maximum will appear at a value of $s$, while scaling in an orthogonal direction will produce a maximum at value $1$.
		\end{restatable}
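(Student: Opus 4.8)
The plan is to translate everything into the combinatorics of minimum spanning trees. It is classical that for a finite metric space the multiset of death times in $0$-dimensional \PH{} of the Vietoris--Rips filtration equals the multiset of lengths at which Kruskal's algorithm merges components on a minimum spanning tree, and that this multiset is independent of the chosen tree. So I would compute this multiset for the grid $X$ and for the scaled copy $X_{\mathbf{u},s}$, and then read off the multiplicative shift. Since a rotation of $\R^2$ is an isometry, I may take $X=\{P_{ij}:=i d_1\mathbf{e}_1+j d_2\mathbf{e}_2\mid 0\le i<n_1,\ 0\le j<n_2\}$ for an orthonormal basis $\mathbf{e}_1,\mathbf{e}_2$, with $\mathbf{u}$ making angle $\theta:=\psi-\varphi$ with $\mathbf{e}_1$. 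In the Euclidean metric the ``horizontal'' edges $P_{ij}P_{i+1,j}$ (length $d_1$) are strictly shorter than every other edge, so Kruskal first inserts all $(n_1-1)n_2$ of them, leaving $n_2$ row-components; the cheapest edges between distinct rows are the ``vertical'' ones (length $d_2<\sqrt{d_1^2+d_2^2}$), $n_2-1$ of which are used. Hence $\death$ has $(n_1-1)n_2$ entries equal to $d_1$ and $n_2-1$ entries equal to $d_2$.

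Next I would record how $S_{\mathbf{u},s}$ distorts lengths. Decomposing a displacement $\mathbf{v}$ into its $\mathbf{u}$-component and the orthogonal remainder gives $\|S_{\mathbf{u},s}\mathbf{v}\|^2=\|\mathbf{v}\|^2+(s^2-1)\langle\mathbf{v},\mathbf{u}\rangle^2$. For $\mathbf{v}=d_1\mathbf{e}_1$ this is $d_1^2 s_1^2$ with $s_1=\sqrt{(s^2-1)\cos^2\theta+1}$; for $\mathbf{v}=d_2\mathbf{e}_2$ it is $d_2^2 s_2^2$ with $s_2=\sqrt{(s^2-1)\sin^2\theta+1}$; and for the two cell diagonals $d_1\mathbf{e}_1\pm d_2\mathbf{e}_2$ it is $c_\pm^2:=d_1^2+d_2^2+(s^2-1)(d_1\cos\theta\pm d_2\sin\theta)^2$. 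Since $s_1\le s$ and $s_2\ge 1$, the hypothesis $s\le d_2/d_1$ yields $d_1 s_1\le d_1 s\le d_2\le d_2 s_2$; in fact any edge $p d_1\mathbf{e}_1+q d_2\mathbf{e}_2$ with $q\ne 0$ has squared length at least $p^2 d_1^2+q^2 d_2^2\ge d_2^2\ge d_1^2 s_1^2$, so in $X_{\mathbf{u},s}$ the horizontal edges remain the globally shortest edges.

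Now I would rerun Kruskal on $X_{\mathbf{u},s}$. It again inserts all $(n_1-1)n_2$ horizontal edges (length $d_1 s_1$), leaving $n_2$ row-components; every remaining edge is either internal to a row (a cycle, discarded) or joins two rows, so the final $n_2-1$ merges all occur at the length $\mu$ of the shortest inter-row edge, and since each consecutive pair of rows is joined by at least $n_1-1$ edges of length $\mu$, the value $\mu$ appears $n_2-1$ times. The one thing left is to show the combinatorial type of the spanning tree is unchanged, i.e.\ $\mu=d_2 s_2$: equivalently $d_2 s_2\le\min(c_+,c_-)$ (longer grid vectors being controlled by the squared-length bound above), which a short computation rewrites as $d_1^2+(s^2-1)d_1\,(d_1\cos^2\theta-2d_2|\cos\theta\sin\theta|)\ge 0$ for all $\theta$, to be established from $s\le d_2/d_1$. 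This is the only place the precise bound on $s$ is needed, and verifying the resulting trigonometric inequality over the whole admissible range is the step I expect to be the main obstacle. Granting it, $\death^{(\mathbf{u},s)}$ has $(n_1-1)n_2$ entries equal to $d_1 s_1$ and $n_2-1$ equal to $d_2 s_2$.

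Finally, $\death$ and $\death^{(\mathbf{u},s)}$ have atoms at $d_1<d_2$ and at $d_1 s_1\le d_2 s_2$ with identical masses $(n_1-1)n_2$ and $n_2-1$; for the (unregularised) optimal transport with a cost nondecreasing in $|x-y|$, the optimal plan on $\R$ is monotone, hence it is the bijective matching $d_1\mapsto d_1 s_1$, $d_2\mapsto d_2 s_2$. By the $1{:}1$ form of the multiplicative death shift, $\mathbf{ms}_i^{\mathbf{u},s}=s_1$ for the $(n_1-1)n_2$ classes with death $d_1$ and $\mathbf{ms}_i^{\mathbf{u},s}=s_2$ for the $n_2-1$ classes with death $d_2$; as all weights equal $1$, the diagram $\mshift_{\mathbf{u},s}(X)$ is the density of $(n_1-1)n_2$ unit masses at $s_1$ together with $n_2-1$ unit masses at $s_2$, i.e.\ exactly the two claimed peaks (which coincide precisely when $\theta$ is an odd multiple of $\pi/4$). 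For the last assertion, $\psi=\varphi$ gives $\theta=0$ and $s_1=s$, while $\psi$ orthogonal to $\varphi$ gives $\theta=\pi/2$ and $s_1=1$; since $(n_1-1)n_2>n_2-1$ the $s_1$-peak is always the taller one, so $\text{peak}_{\mathbf{u},s}(X)$ equals $s$ in the first case and $1$ in the second.
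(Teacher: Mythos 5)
Your MST/Kruskal reduction is a more careful rendering of the argument the paper itself uses (the paper simply asserts the two death-time multisets for the original and scaled grids and then invokes the monotone one-dimensional transport plan). Your treatment of the original grid, of the first $(n_1-1)n_2$ merges of the scaled grid, and of the optimal matching is correct. However, the step you flag as the main obstacle\,---\,that the $n_2-1$ inter-row merges of the scaled cloud occur at $d_2s_2$ and not at some oblique lattice vector\,---\,is not merely hard: it is false under the stated hypotheses, and it is exactly the point the paper's proof passes over in silence. Writing $\theta=\psi-\varphi$ and $c(p)$ for the scaled length of $p\,d_1\mathbf{e}_1+d_2\mathbf{e}_2$, one computes
\[
c(p)^2-(d_2s_2)^2 \;=\; p^2d_1^2s_1^2\;+\;p\,(s^2-1)\,d_1d_2\sin(2\theta),
\]
which is negative for $p=-\operatorname{sign}(\sin 2\theta)$ whenever $(s^2-1)\,d_2\,|\sin 2\theta|>d_1s_1^2$. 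At $\theta=\pi/4$ this condition reads $d_2/d_1>(s^2+1)/(2(s^2-1))$, which is forced by $d_1<d_2$ for every $s\ge\sqrt 3$, and already for $d_1=1$, $d_2=2$, $s=2$, $\theta=\pi/4$ (all hypotheses of the theorem satisfied) the shortest inter-row edge of the scaled grid is the antidiagonal, of length $\sqrt{6.5}\approx 2.55$ rather than $d_2s_2=\sqrt{10}\approx 3.16$, so the second peak actually sits at $\approx 1.27$ instead of $s_2\approx 1.58$. Your parenthetical claim that longer lattice vectors are controlled by the earlier squared-length bound also does not help here: that bound only dominates $d_1s_1$, not $d_2s_2$, and for large $d_2/d_1$ the minimising $p$ can even have $|p|>1$.

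What does survive without further hypotheses is the first peak at $s_1$ with weight $(n_1-1)n_2$ (your argument that the horizontal edges remain globally shortest is sound, using $s\ge 1$, which both you and the paper assume implicitly) and the \enquote{in particular} clause, since $\sin 2\theta=0$ when $\psi-\varphi\in\{0,\pi/2\}$. The second-peak formula needs the additional condition $(s^2-1)\,d_2\,|\sin 2(\psi-\varphi)|\le d_1s_1^2$ (or a correspondingly tighter bound on $s$ in terms of $d_2/d_1$ and the probing angle), together with a check over all admissible $p$, not just $p=\pm1$. So your proposal correctly isolates the one genuinely load-bearing inequality, but as written it cannot be completed for general $\psi$, and neither can the paper's own proof.
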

	\end{adjustbox}
	\hfill%
	\begin{adjustbox}{valign=b, minipage=0.24 \linewidth}
		\includegraphics[width=\linewidth]{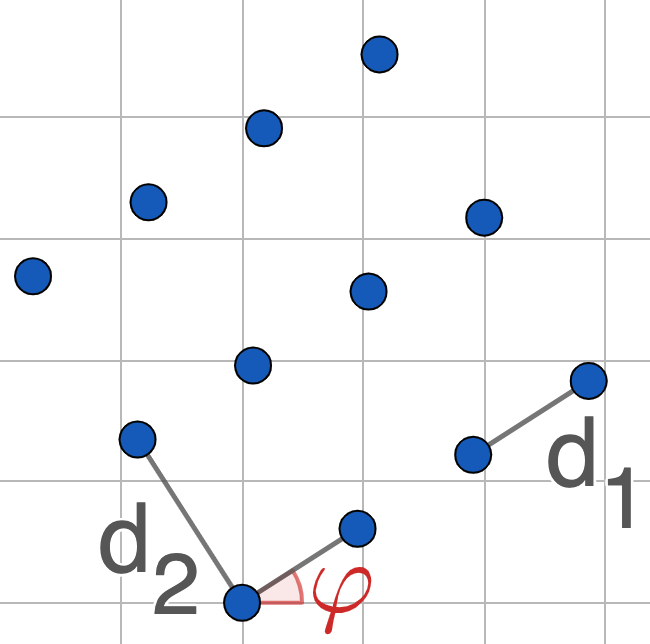}
	\end{adjustbox}
	
	\vincent{There is a version of this proof where each of the points can have additive noise $\le \varepsilon$ that is similarly easy to show. We might want to use this? Or at least include the statement of it?}
	
	\begin{remark}
		This theorem explains how we can use the \NIPH{} framework to extract orientations of point clouds: By probing different scaling directions and the associated mult. shift diagrams, we can look for the diagrams with the largest $x$-value of the peak. This will then approximate the orientation of the point cloud.
		Guarantees in the setting with noise and the proofs can be found in \Cref{sec:proofs}.
	\end{remark}
	\section{Experiments}
	\label{sec:Experiments}
	\paragraph{Orientation, orientational variance, and scaling factor}
	\begin{figure}[b]
		\begin{adjustbox}{valign=b, minipage=0.48 \linewidth}
			\scriptsize
			\begin{tabular}{c|rrr}
				\toprule
				$V=\text{std}(\varphi )$&$\sqrt{\text{MSE}}$ $\varphi$& $\sqrt{\text{MSE}}$ scaling&$\sqrt{\text{MSE}}$ $\sqrt{V}$\\
				\midrule
				$0^\circ$& $0.3^\circ$& $0.09$&$0.26$\\
				$5^\circ$& $0.8^\circ$& $0.11$&$0.18$\\
				$10^\circ$& $1.6^\circ$& $0.15$&$0.12$\\
				$15^\circ$& $2.3^\circ$& $0.18$&$0.08$\\
				$20^\circ$& $3.0^\circ$& $0.19$&$0.07$\\
				$25^\circ$& $4.0^\circ$& $0.19$&$0.07$\\
				$30^\circ$& $5.1^\circ$& $0.19$&$0.08$\\
				$35^\circ$& $6.2^\circ$& $0.20$&$0.08$\\
				$40^\circ$& $8.0^\circ$& $0.20$&$0.08$\\
				\midrule
				\bottomrule
			\end{tabular}
			\caption{	\textbf{Quantitative performance of \NIPH{}.}
				We have run \NIPH{} on a point cloud sampled from 200 oriented rectangles with different orientational variances and $s=2$.
				We show the root of the mean squared error of the predictions
				of \NIPH{} depending on the orientational variance of $X$.}
		\label{tab:SynthExp}
	\end{adjustbox}%
	\hfill%
	\begin{adjustbox}{valign=b, minipage=0.42  \linewidth}
		\begin{center}
			\includegraphics[width=0.9\linewidth]{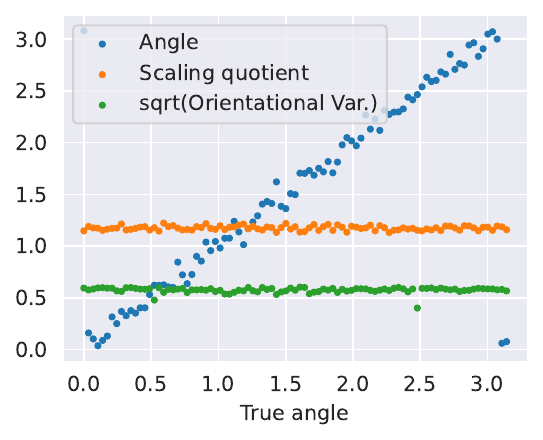}
			\caption{\textbf{Predictions of \NIPH{} in high orientational variance setting.} $x$-axis: True $\varphi$. 
				\emph{Blue:} $\varphi$.
				\emph{Green:} $\sqrt{V}$. (True value: $0.5$).
				\emph{Orange:} Scaling $s$. (True value: $1.5$).}
			\label{fig:SynthExpVaryingPhi}
		\end{center}
	\end{adjustbox}
\end{figure}

We ran experiments on synthetic data to verify that \NIPH{} can indeed infer information on the orientation, scaling, and variance inside a data set.
Given an angle $\varphi$, an orientational variance $V$, and a scaling factor $s$, we created a point cloud resembling ellipses or rectangles with sizes between $0.2$ and $2$.
We scaled the different axes of the ellipses/rectangles according to the scaling factor $s$, oriented them in the general direction of $\varphi$ and added independent normal distributed noise with variance $V$ and mean $0$ to each orientation of the individual ellipses/rectangles.

We then used the \NIPH{} pipeline to infer the parameters of the direction of orientation $\varphi$, the orientational variance $V$, and the scaling factor $s$ from the point cloud data.
We report the prediction accuracy of \NIPH{} in \Cref{tab:SynthExp} and \Cref{fig:SynthExpVaryingPhi}.
We note that even with high orientational variance in the synthetic data, \NIPH{} is able to predict orientation high accuracy, and obtains scaling and orientational variance with medium accuracy. \textsmaller{PCA} cannot infer any of these parameter, see \Cref{sec:pca}.

\paragraph{Road networks}
\begin{figure}[tb!]
	\begin{center}
		\begin{subfigure}{0.23\columnwidth}
			\includegraphics[width=\columnwidth]{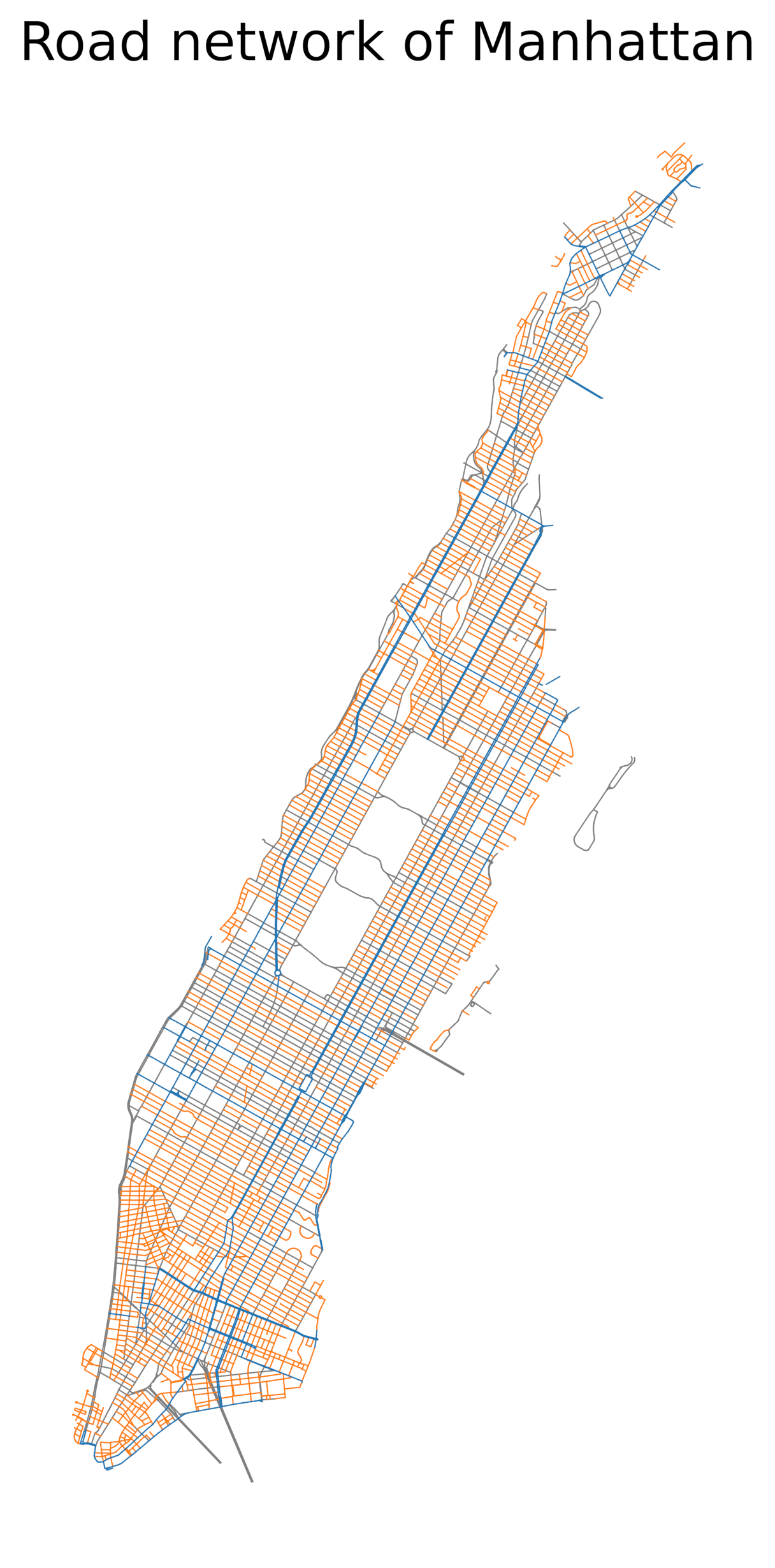}
		\end{subfigure}
		\begin{subfigure}{0.38\columnwidth}
			\includegraphics[width=\columnwidth]{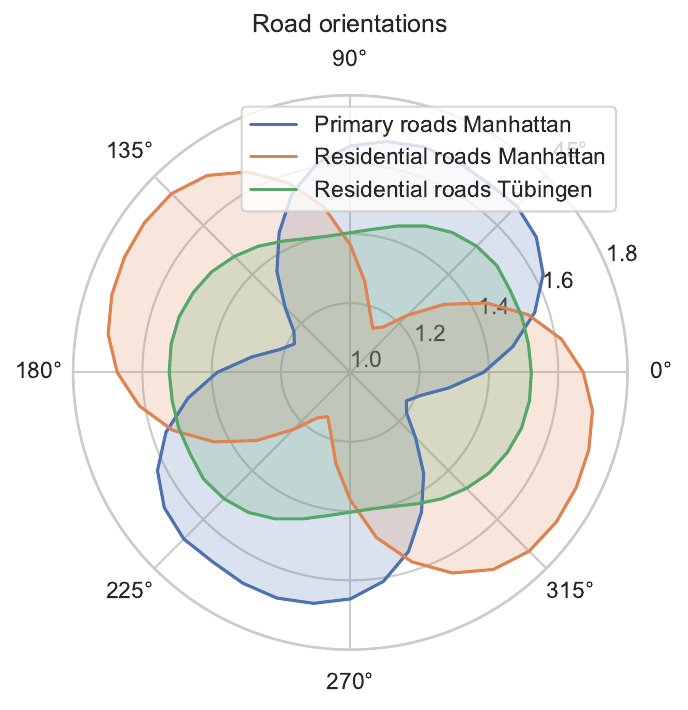}
		\end{subfigure}
		\begin{subfigure}{0.37\columnwidth}
			\includegraphics[width=\columnwidth]{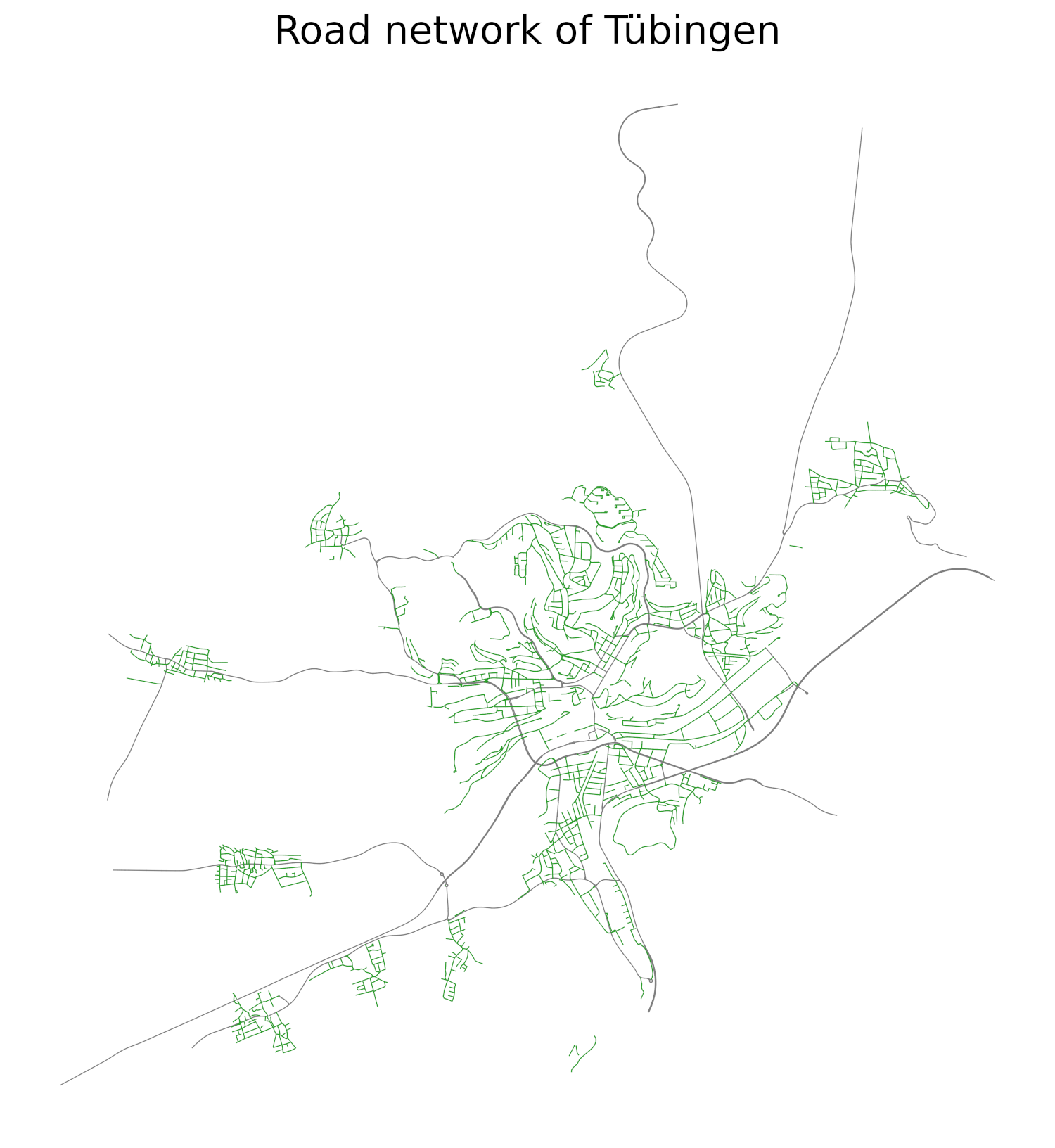}
		\end{subfigure}
		\caption{\emph{Left and right:} Road networks of Manhattan and Tübingen. Different colours denote different type of roads.
			\emph{Middle:} Strength of orientation along different directions of different road types in Manhattan and Tübingen. Residential roads (\enquote{streets}) and primary roads (\enquote{avenues}) have a very strong orientation in orthogonal directions. 
			Residential roads in Tübingen have a very faint bias of an east--west orientation.
			The strength of orientation is measured by the $x$-value of the peaks in the corresponding mult.\ shift diagrams.
		}
		\label{fig:RoadNetworks}
	\end{center}
\end{figure}
\begin{figure}[tb!]
	\begin{center}
		\begin{subfigure}{0.37\columnwidth}
			\includegraphics[width=\columnwidth]{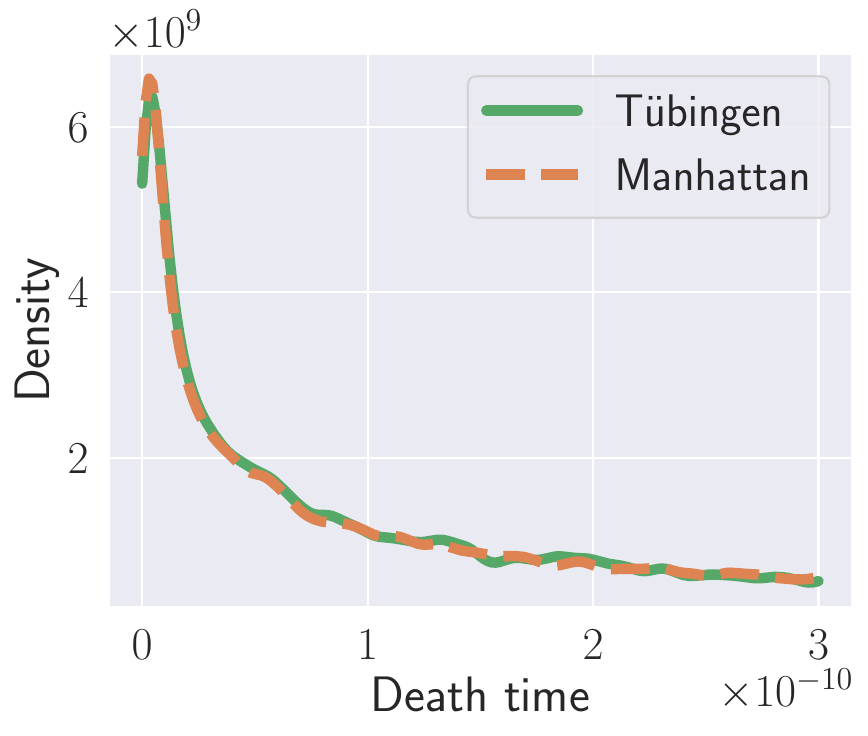}
		\end{subfigure}
		\begin{subfigure}{0.61\columnwidth}
			\includegraphics[width=\columnwidth]{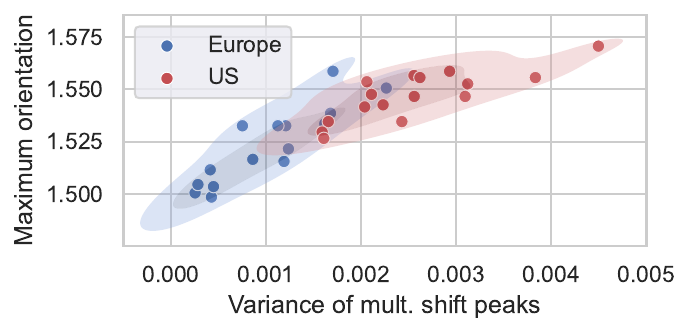}
		\end{subfigure}
		\caption{\emph{Left:} $0$-dimensional \PH{} density plot corr.\ to \Cref{fig:RoadNetworks}. Virtually no difference exists between the data sets exists, underscoring the relevance of \NIPH{}.
			\emph{Right:} $2$ measures of orientation strength of points clouds sampled on residential roads in $15$ small European and US cities.
			While outliers exist, the grid-like structure of US cities results in a higher strength of orientation in the point clouds.
		}
		\label{fig:ManyCities}
	\end{center}
\end{figure}
We used $0$-dimensional \NIPH{} on road network data from Manhattan, New York City, USA, and Tübingen, Germany.
We first extracted the road networks from openstreetmap data (\cite{OpenStreetMap}).
We then created a point cloud by choosing random locations of $130000$ cars (NYC) and $43000$ cars (Tübingen) on the respective road networks.
We then ran the \NIPH{}-pipeline for $0$-dimensional homology on the point clouds restricted to specific road types.
This allows us to extract information on the predominant orientations in the point cloud, see \Cref{fig:RoadNetworks}.
The point clouds generated from the Manhattan road network have a very strong orientation, whereas the point clouds resembling the Tübingen road network have virtually no preferred orientation.

Additionally, we used \NIPH{} on $15$ US and European cities to extract the orientational strength of point clouds sampled on the road network.
On average, US cities have a higher strength of orientation than their European counterparts (\Cref{fig:ManyCities} \emph{Right}).
This experiment demonstrates that \NIPH{} is a powerful tool to extract notions of orientation and other information inaccessible to ordinary \PH{}.
The $0$\textsuperscript{th} \PH{} density diagrams show virtually no difference between Tübingen and Manhattan (\Cref{fig:ManyCities} \emph{Left}).
\section{Discussion}
\paragraph{Limitations and future work}
Because persistent homology in higher dimensions is computationally expensive, for very large point clouds, only $0$-dimensional \NIPH{} is feasible.
However, landmark sampling or down-sampling can often mitigate this problem.
In certain cases, especially for inhomogeneous data sets, the optimal transport will produce an incorrect matching between the persistent homology classes, falsifying the results of the algorithm (See \Cref{sec:limitations} for an in-depth analysis).
Future work will consider other methods to match homology classes, more deeply rooted in the algebraic machinery behind \PH{}, including tools like vineyards \citep{cohensteiner2006}.
Furthermore, using the orientations of the death simplices (Cf. \Cref{subsec:AdditionalApproaches}) does not even require the computation of a matching.
The flexibility of the \NIPH{} framework allows for a whole avenue of research using different hand-crafted families of metrics for exciting new applications.
\paragraph{Conclusion}
We have introduced \NIPH{}, a novel method building on top of persistent homology.
\NIPH{} extracts additional topological and geometrical information by varying the distance function on the underlying space and analysing the corresponding shifts in the persistence diagrams.
We have verified the performance of \NIPH{} on a synthetic data set and on real-world road network data.

\bibliographystyle{unsrtnat}
\bibliography{refpc.bib}
\appendix
\section{Proofs and Theoretical Guarantees for Noisy Grids}
\label{sec:proofs}
We will now give a proof of \Cref{thm:cleangrid} and \Cref{thm:noisygrid} proving guarantees for noisy grids:

\TheoremGuarantees*
\begin{proof}
	The \NIPH{} pipeline consists of multiple steps: First we will compute the persistent $0$-homology of the base space.
	Because of the grid structure, we will obtain $(n_1-1)n_2$ homology classes with a death time of $d_1$, $n_2-1$ classes with a death time of $d_2$ and $1$ class with death time $\infty$.
	Now we consider the scaled point cloud.:
	After rotating by $-\psi$, some geometric calculations reveal a scaling factor of
	\[
	s_1:=\sqrt{s^2\cos^2 (\psi-\varphi)+\smash{\sin}^2 (\psi-\varphi)}=\sqrt{(s^2-1)\cos^2 (\psi-\varphi)+1}\le s\le d_2/d_1
	\]
	in the $d_1$ direction and of $s_2:=\sqrt{(s^2-1)\smash{\sin}^2 (\psi-\varphi)+1}\ge 1$ in the $d_2$ direction.
	We notice that because of our choice of $s$, we still have that $s_1d_1\le s_2d_2$.
	Thus, the persistence diagram of the scaled point cloud consists of $(n_1-1)n_2$ classes with a death time of $s_1d_1$, $n_2-1$ classes with a death time of $s_2d_2$, and one class with death time $\infty$.
	Now, unweighted optimal transport in the $1d$ setting has a straight-forward solution inducing a $1:1$ mapping of the $(n_1-1)n_2$ points from $d_1$ to $s_1d_1$ with a multiplicative shift of $s_1$ and a $1:1$ mapping of the $n_2-1$ points from $d_2$ to $s_2d_2$ with a mult.\ shift of $s_2$.
	This then induces the multiplicative shift diagram as described in the theorem.
\end{proof}

	\begin{restatable}[Guarantees for noisy grids]{theorem}{TheoremNoisyGuarantees}
		\label{thm:noisygrid}
		Assume we are in the situation of \Cref{thm:cleangrid} but with an additive random noise $\varepsilon$ with $|\varepsilon|<\delta$ added to every point independently, with $d_1+\delta<d_2-\delta$ and $s\le (d_2-2\delta)/(d_1+2\delta)$.
		The mult.\ shift diagram will then have a mass of $(n_1-1)n_2$ in the interval $[(s_1d_1-2s\delta)/(d_1+2\delta),(s_1d_1+2s\delta)/(d_1-2\delta)]$.
	\end{restatable}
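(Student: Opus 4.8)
The plan is to follow the four stages of the proof of \Cref{thm:cleangrid}\,---\,the persistent $0$-homology of the base cloud, that of the scaled cloud, the optimal transport matching between them, and the resulting multiplicative shift\,---\,but to carry a $\delta$-sized perturbation through each stage and control how it propagates. Throughout I use the standard fact (already implicit in the proof of \Cref{thm:cleangrid}) that the multiset of finite $0$-dimensional death times of a Vietoris--Rips filtration equals the multiset of edge lengths of a minimum spanning tree of the point set.

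\emph{Death times of the two clouds.} Write $X=\{p+\varepsilon_p\}$, where $\{p\}$ is the clean rotated grid and $|\varepsilon_p|<\delta$. Perturbing the two endpoints of a clean grid edge of length $d_1$ (resp.\ $d_2$) by at most $\delta$ each changes its length by at most $2\delta$, and every non-grid pair stays strictly longer than the perturbed long edges; the separation hypothesis $d_1+\delta<d_2-\delta$ keeps the perturbed short-edge lengths below the perturbed long-edge lengths, so Kruskal's algorithm still selects the clean spanning tree. Hence the $0$-th persistence diagram of $X$ has $(n_1-1)n_2$ finite deaths in $[d_1-2\delta,d_1+2\delta]$, $n_2-1$ finite deaths in $[d_2-2\delta,d_2+2\delta]$, and one infinite class. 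For the scaled cloud, note $X_{\psi,s}=S_{\psi,s}(X)$ and that $S_{\psi,s}$ has operator norm $\max(s,1)$ (equal to $s$ in the stretching regime $s\ge1$; the case $s<1$ only improves the bounds), so each point of $X_{\psi,s}$ is displaced by less than $s\delta$ from the corresponding point of the clean scaled grid. Since the hypothesis implies the scaling bound $s\le d_2/d_1$ of \Cref{thm:cleangrid}, that theorem gives $(n_1-1)n_2$ deaths at $s_1d_1$ and $n_2-1$ at $s_2d_2$ for the clean scaled grid; repeating the minimum-spanning-tree argument with displacement bound $s\delta$ in place of $\delta$ yields $(n_1-1)n_2$ finite deaths of $X_{\psi,s}$ in $[s_1d_1-2s\delta,s_1d_1+2s\delta]$, $n_2-1$ in $[s_2d_2-2s\delta,s_2d_2+2s\delta]$, and one infinite class. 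The condition $s\le(d_2-2\delta)/(d_1+2\delta)$ is tailored so that $s_1d_1+2s\delta<s_2d_2-2s\delta$, keeping these two clusters of death times disjoint.

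\emph{Matching and shift.} Unweighted optimal transport between two $1$-dimensional mass distributions is realised by the monotone (sorted) matching, exactly as in the proof of \Cref{thm:cleangrid}. By the disjointness just established, in both $X$ and $X_{\psi,s}$ the $(n_1-1)n_2$ short-edge classes are precisely the smallest finite deaths, and the unique infinite class is common to both; hence the monotone matching $t$ restricts to a bijection between the short-edge classes of $X$ and those of $X_{\psi,s}$. For such an index $i$ the transport plan is $1{:}1$, so $\mathbf{ms}_i^{\psi,s}(X)=\death^{\psi,s}_{t(i)}(X)/\death_i(X)$ with numerator in $[s_1d_1-2s\delta,s_1d_1+2s\delta]$ and denominator in $[d_1-2\delta,d_1+2\delta]$ (both positive once $\delta<d_1/2$). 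Therefore
\[
\mathbf{ms}_i^{\psi,s}(X)\in\left[\frac{s_1d_1-2s\delta}{d_1+2\delta},\ \frac{s_1d_1+2s\delta}{d_1-2\delta}\right]
\]
for each of the $(n_1-1)n_2$ such indices $i$, so $\mshift_{\psi,s}(X)$ carries mass $(n_1-1)n_2$ in that interval, which is the assertion.

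\emph{Main obstacle.} The conceptual skeleton is short; the real work sits in the two places where a perturbation bound must beat a gap\,---\,showing the noisy minimum spanning tree is combinatorially the clean one, and showing the $1$d optimal transport matches the two death clusters to one another rather than crossing them. Both reduce to disjointness of $[d_1-2\delta,d_1+2\delta]$ from $[d_2-2\delta,d_2+2\delta]$ and of the corresponding scaled intervals, which is what the two hypotheses on $\delta$ and $s$ are there to encode; carefully chasing these constants (and separately handling the benign case $s<1$, where $\max(s,1)=1$) is the fiddly part of a full write-up.
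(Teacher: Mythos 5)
Your proof is correct and follows essentially the same route as the paper's own (much terser) argument: the paper likewise observes that each point moves by at most $\delta$ before the scaling and at most $s\delta$ after it, and obtains the stated interval by dividing the resulting bounds on the death times. Your write-up simply makes explicit the details the paper leaves implicit, namely the minimum-spanning-tree characterisation of the $0$-dimensional deaths and the monotone structure of the one-dimensional optimal transport matching.
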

	\begin{proof}
		The proof of the theorem in the noisy case works essentially the same as in the noise-free case.
		The important observation is that before the shift, both points vary at most by a distance of $\delta$ from their original grid position, whereas after the shift, both points vary at most by a distance of $s\delta$.
		Dividing these bounds yields a new approximation of the multiplicative shift.
	\end{proof}
	\section{Comparison to Principal Component Analysis}
	\label{sec:pca}
	\begin{figure}[tb!]
		\begin{center}
			\begin{subfigure}{0.24\columnwidth}
				\includegraphics[width=\columnwidth]{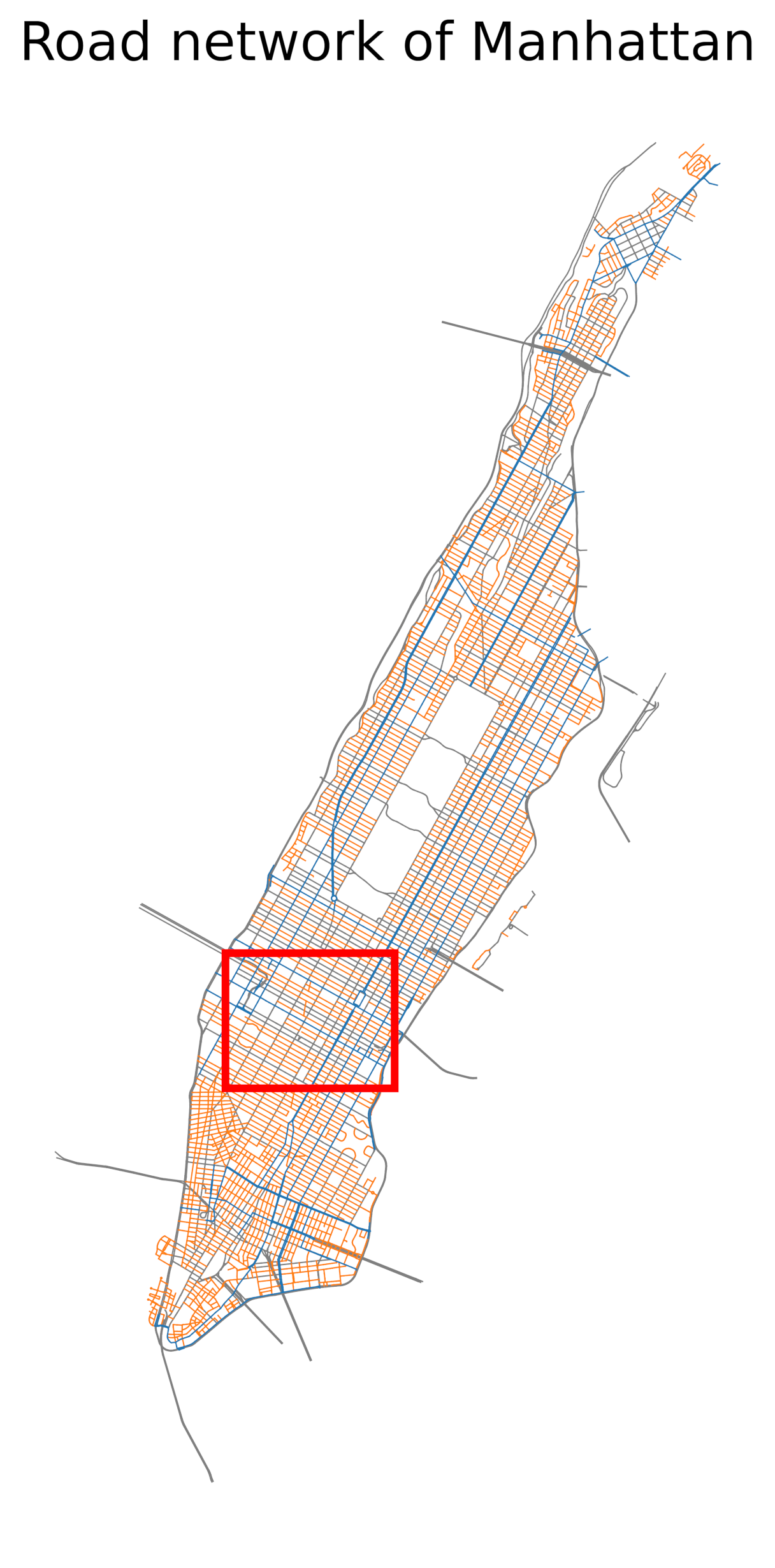}
			\end{subfigure}
			\begin{subfigure}{0.4\columnwidth}
				\includegraphics[width=\columnwidth]{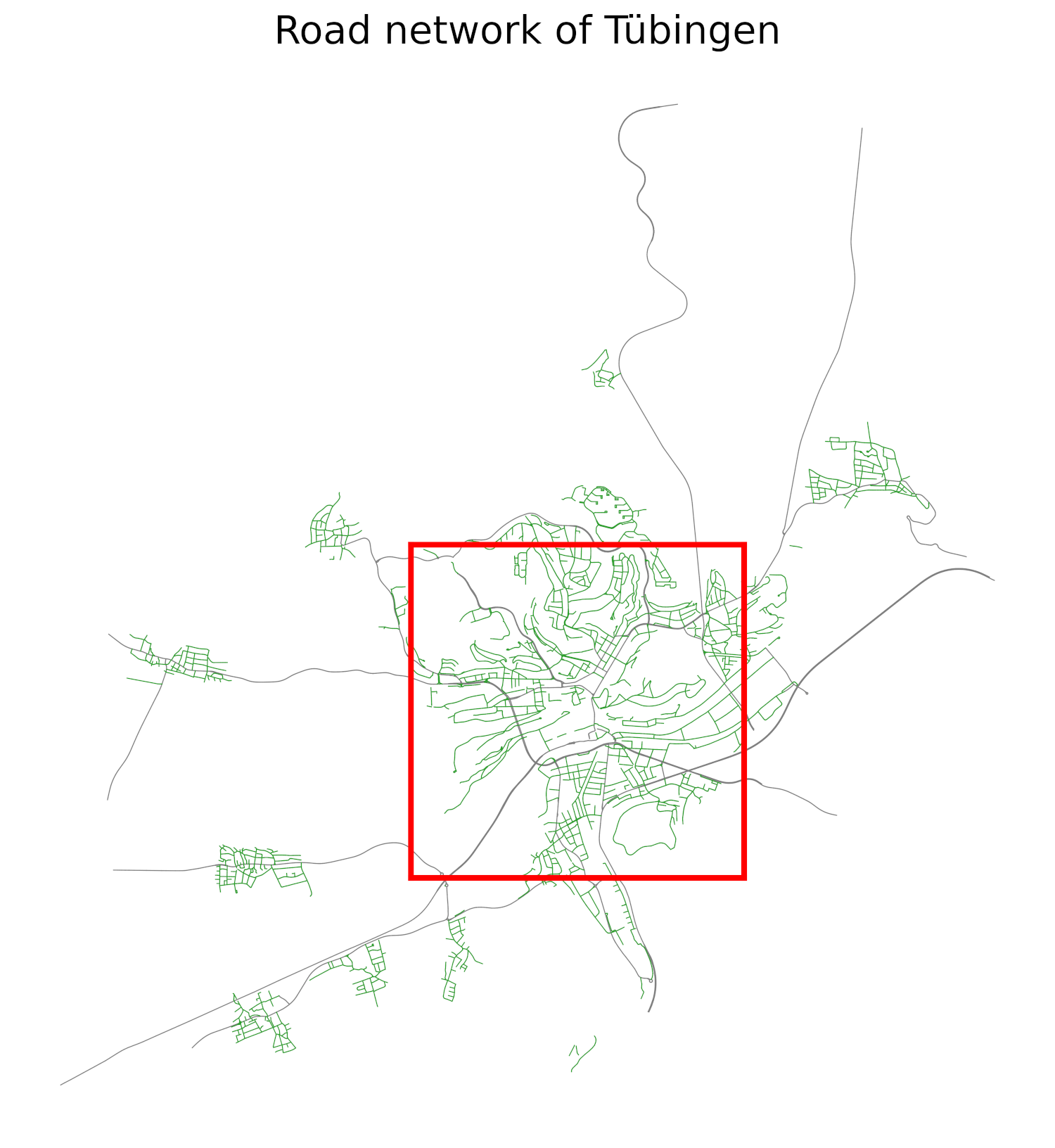}
			\end{subfigure}
			\begin{subfigure}{0.48\columnwidth}
				\includegraphics[width=\columnwidth]{figs/Road_orientations.pdf}
			\end{subfigure}
			\begin{subfigure}{0.48\columnwidth}
				\includegraphics[width=\columnwidth]{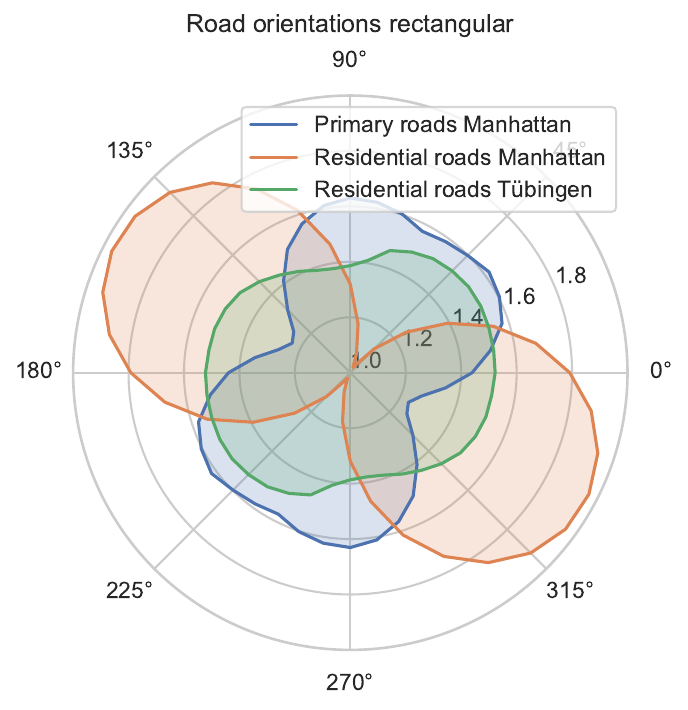}
			\end{subfigure}
			\begin{subfigure}{0.5\columnwidth}
				\scriptsize
				\begin{tabular}{c|rr}
					\toprule
					\textbf{\textsmaller{PCA} results}& $\varphi$ everything &$\varphi$ rectangle\\
					\midrule
					Primary roads Manhattan& $68.8^\circ$& $154.4^\circ$\\
					Residential roads Manhattan& $67.8^\circ$& $26.4^\circ$\\
					Residential roads Tübingen& $47.5^\circ$&$73.4^\circ$\\
					\midrule
					\bottomrule
				\end{tabular}
			\end{subfigure}
			
			\caption{\emph{Top:} Road networks of Manhattan and Tübingen. Different colours denote different type of roads. Red rectangle denotes borders of smaller dataset
				\emph{Middle:} Strength of orientation along different directions of different road types in Manhattan and Tübingen. Residential roads (\enquote{streets}) and primary roads (\enquote{avenues}) have a very strong orientation in orthogonal directions. \emph{Left:} Entire dataset, \emph{Right:} Dataset restricted to rectangle.
				Note that \NIPH can extract the same orientations on both datasets.
				\emph{Bottom:} Angles of principal directions obtained by \textsmaller{PCA}.
				Although the local structure is unchanged when restricting to the rectangle, the resulting \textsmaller{PCA} direction is entirely different.
				Note furthermore \textsmaller{PCA} estimates primary and residential roads in Manhattan to have the same orientation which is entirely false.
			}
			\label{fig:RoadNetworkspca}
		\end{center}
	\end{figure}
	\begin{figure}[htb!]
		\begin{center}
			\begin{subfigure}{0.48\columnwidth}
				\includegraphics[width=\columnwidth]{figs/Different_Cities.pdf}
			\end{subfigure}
			\begin{subfigure}{0.48\columnwidth}
				\includegraphics[width=\columnwidth]{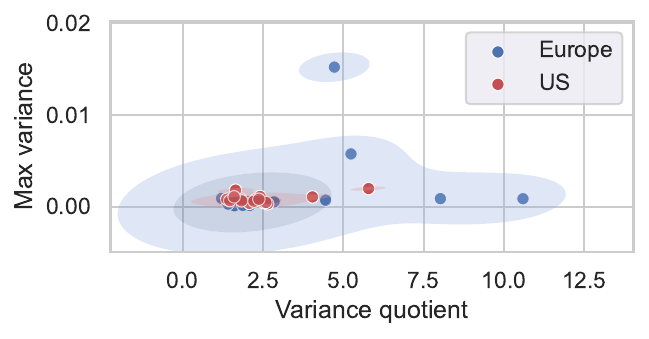}
			\end{subfigure}
			\caption{Orientational strength of road grids in US and European cities extracted by \emph{Left:} \NIPH{} and \emph{Right:} \textsmaller{PCA}.
			}
			\label{fig:ManyCitiespca}
		\end{center}
	\end{figure}
	\begin{figure}[tb!]
		\begin{center}
			\begin{subfigure}{0.47\columnwidth}
				\includegraphics[width=\columnwidth]{figs/synth_var_phi.pdf}
			\end{subfigure}
			\begin{subfigure}{0.47\columnwidth}
				\includegraphics[width=\columnwidth]{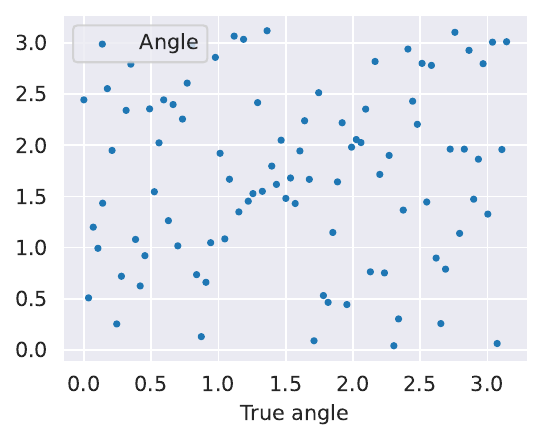}
			\end{subfigure}
			\caption{Trying to recover the orientation of synthetic data sets. \emph{$x$-axis:} True orientation. \emph{Blue on the $y$-axis:} Inferred orientation. \emph{Left:} Results obtained with \NIPH{}, \emph{Right:} Results obtained with \textsmaller{PCA}. It is clear that \NIPH{} outperforms \textsmaller{PCA} by a wide margin.
			}
			\label{fig:SynthExpPca}
		\end{center}
	\end{figure}
	Principal component analysis (\textsmaller{PCA}) is a standard tool in machine learning and data science to determine the principal directions in high-dimensional point clouds.
	However, \textsmaller{PCA} only considers the global shape of the data, whereas \NIPH{} is able to extract the orientation based on local structures, which is important in many real-world applications.
	When using \textsmaller{PCA} on the data sets introduced in \Cref{sec:Experiments}, we noticed that the principle direction inferred by \textsmaller{PCA} depends on the shape of the considered subset/cutout of the data and not on the local structures in the data.
	In \Cref{fig:RoadNetworkspca} we show that \textsmaller{PCA} does not infer correct orientations in the Manhatten/Tübingen road networks.
	In \Cref{fig:ManyCitiespca} we have tried to replicate the results of \Cref{fig:ManyCities} using \textsmaller{PCA}. We try to determine the orientational variance by the maximum variance of the data set in any direction ($y$-axis) and the quotient of the variance of the data in the first and second principal direction.
	As shown in the diagram, \textsmaller{PCA} fails to capture meaningful information.
	In \Cref{fig:SynthExpPca} we have compared the effectiveness of \NIPH{} with the effectiveness of \textsmaller{PCA} in uncovering hidden $1$-dimensional orientations on synthetic datasets.
	\NIPH{} clearly outperforms \textsmaller{PCA}.
	\FloatBarrier
	\section{Limitations}
	\label{sec:limitations}
	\begin{figure}[tb!]
		\begin{center}
			\begin{subfigure}{0.47\columnwidth}
				\includegraphics[width=\columnwidth]{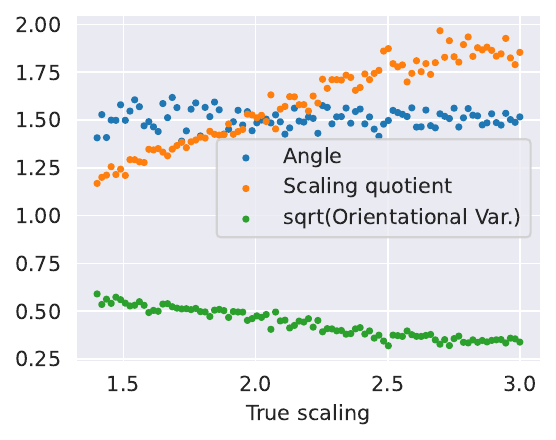}
			\end{subfigure}
			\begin{subfigure}{0.47\columnwidth}
				\includegraphics[width=\columnwidth]{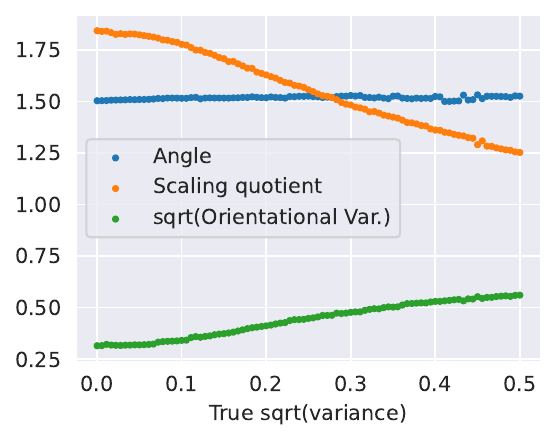}
			\end{subfigure}
			\caption{\emph{Left:} Estimations of \NIPH{} for fixed orientation angle $\phi=1.5$, orientational variance $\sqrt{V}=0.2$ and varying scaling factor.
				\emph{Right:} Estimation of \NIPH{} for fixed orientation angle $\phi=1.5$, scaling factor $s=2$ and varying orientational variance.
				\NIPH{} systematically overestimates the orientational variance and underestimates the scaling factor. This effect is more pronounced for high orientational variances and low scaling factors.
			}
			\label{fig:Limitations}
		\end{center}
	\end{figure}
	\begin{figure}[tb!]
		\begin{center}
			\begin{subfigure}{0.47\columnwidth}
				\includegraphics[width=\columnwidth]{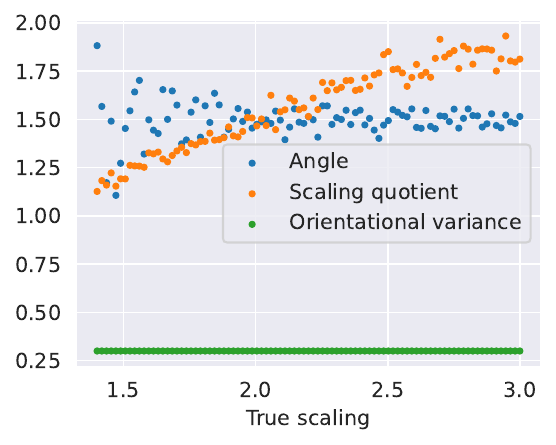}
			\end{subfigure}
			\begin{subfigure}{0.47\columnwidth}
				\includegraphics[width=\columnwidth]{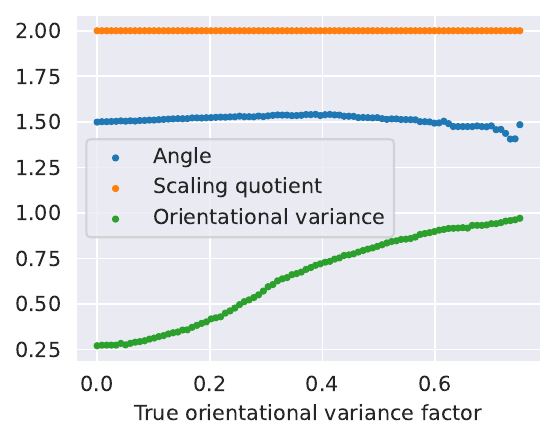}
			\end{subfigure}
			\caption{Performance of \NIPH{} when being told one correct parameter. \emph{Left:} Estimations of \NIPH{} for fixed orientation angle $\phi=1.5$,  fixed \emph{known} orientational variance $\sqrt{V}=0.3$ and varying scaling factor.
				\emph{Right:} Estimation of \NIPH{} for fixed orientation angle $\phi=1.5$, \emph{known} scaling factor $s=2$ and varying orientational variance.
			}
			\label{fig:LimitationsMoreKnowledge}
		\end{center}
	\end{figure}
	Across all our experiments, \NIPH{} was able to extract the underlying orientation of the point cloud with high accuracy.
	However, in certain parameter regimes, \NIPH{} systematically underestimated the scaling factor and overestimated the orientational variance, see \Cref{fig:Limitations}.
	However, because of the systematic nature of the error, this may be mitigated by training an \textsmaller{ML} model on the \NIPH{} outputs to infer the true parameters.
	We claim that there are two separate effects that likely contribute to these results:
	\begin{enumerate}[i)]
		\item Firstly, both increasing the orientational variance and decreasing the scaling factor results in decreasing the $x$-value of the peaks of the multiplicative death shift density diagrams sampled in the direction of the primary orientation of the data set.
		For the scaling factor, this is simply the case because a small scaling factor limits the maximum possible multiplicative shift.
		For the orientational variance, this is the case because a higher orientational variance means that fewer of the ellipses perfectly align with the sampling direction, thus decreasing the maximal possible multiplicative shift.
		Although low scaling factor and high orientational variance induce different behaviour in the other peaks, the effects on the peaks with the largest $x$-values are the most pronounced and thus are the main contributors to the parameter estimation of \NIPH.
		\item The second effect leading to this over- and underestimation is the noise introduced to the optimal transport maps due to the varying size of the ellipses, with the smaller principal axis ranging from  a length of $0.2$ to $2$.
		Because the cost of the optimal transport assignment increases with higher multiplicative shifts, this likely decreases the $x$-value of the peaks of the multiplicative death shift density diagrams.
		Because we have no mathematical model to factor in this effect, the optimisation algorithm tries to approximate this effect by increasing the orientational variance.
		(In our experiments by roughly $0.25$, see \Cref{fig:LimitationsMoreKnowledge}.)\vincent{I could do experiments by varying this variance of ellipse size and study the effect.}
	\end{enumerate}
	
	To further study these two effects, we have analysed the performance of \NIPH{} if the optimisation algorithm gets told one correct parameter.
	Because \NIPH{} infers the correct orientation with high accuracy even in the base case, we have implemented this analysis only for the scaling factor and the orientational variance, see \Cref{fig:LimitationsMoreKnowledge}.
	It appears that providing \NIPH{} with the correct orientational variance does not improve the performance, but even worsens it (\emph{Left} diagram).
	This supports the second claim: Because \NIPH{} now cannot model some of the noise as noise stemming from orientation variance, its predictive performance is decreased.
	The \emph{right} diagram shows that giving \NIPH{} the correct scaling factor does not decrease performance. However, now the orientational variance is roughly constantly overestimated by a value of $0.25$.
	This supports the first claim: In the previous experiments (\Cref{fig:Limitations} \emph{Right}), overestimation of the orientational variance decreased, as the underestimation of the scaling quotient increased.
	When preventing underestimation of the scaling quotient, the orientational variance now is consequently overestimated.
	
	Future work could either work on training an \textsmaller{ML} approach to compensate for these systematic errors, or to give a more complex mathematical model that better incorporates all sources of noise.
	However, the main goal of the current paper is to introduce the novel idea of changing the underlying metric of point clouds and then quantifying changes in the \PH{} to infer geometric information on the point cloud and to provide a first proof-of-concept.
	We have demonstrated that \NIPH{} can harvest rich information, can accurately infer the orientation of a point cloud, and can give a sensible estimation of the scaling factor and orientational variance.
	We believe that this already serves as a  proof-of-concept of the powerful ideas of \NIPH{}, leaving some fine-tuning on implementational details to future work.
	\FloatBarrier
	\section{Implementational details}
	We implemented \NIPH{} using python.
	The code to replicate all experiments in this paper can be found here \href{https://git.rwth-aachen.de/netsci/publication-2023-non-isotropic-persistent-homology}{https://git.rwth-aachen.de/netsci/publication-2023-non-isotropic-persistent-homology}.
	We generated the density plots using gaussian kernel densities estimates and set the bandwidth parameter using Scott's rule.
	For solving the optimisation problem, we used dual annealing from the SciPy optimisation library.
	We determined the peaks of the multiplicative death shift density curves using a grid search.
	For \Cref{fig:ManyCities} \emph{Right}, we compared point clouds sampled from the residential road networks of $\sim 15$ US and European cities.
	The cities were chosen based on a size of roughly $50000$--$100000$ inhabitants and city boundaries correctly represented by openstreetmaps.
	We used the European Cities Tübingen,
	Göttingen,
	Cambridge UK,
	Meran,
	Södertälje,
	Belfort,
	Namur,
	Delft,
	Quedlinburg,
	Passau,
	Jena,
	Legnica,
	Flensburg,
	Görlitz, and
	King's Lynn and West Norfolk.
	For US cities, we used
	Albany NY,
	South Bend IN,
	Boulder CO,
	Davenport IA,
	Cupertino CA,
	Nampa ID,
	Cheyenne WY
	Wheaton IL,
	Utica NY,
	Santa Fe NM,
	Bismarck ND,
	Coon Rapids MN,
	Sioux City IA,
	Great Falls MT,
	Rapid City SD,
	and
	Sunrise FL.
	For \NIPH, we sampled $15$ different directions and $9$ different scaling factors between $1.2$ and $2.5$.
	We have sampled the locations of the cars uniformly and independently at random across the entire road network.
	Hence the expected number of cars on a certain road segment across a city only depended on the length of the segment.
	
	For the synthetic experiments as shown in \Cref{fig:SynthExpVaryingPhi}, for each dataset we have generated $200$ ellipses with random centre point in a $3000\times3000$ square with each 100 points. We controlled the principal direction of the ellipses and added normal distributed noise to this principal direction based on the given orientational variance.
	We uniformly set the length of the smaller principal axis to a value between $0.2$ and $2$ and scaled the larger principal axis accordingly.
	An excerpt of the datasets can be found in the top of \Cref{fig:SynthExpSketch}.
	\subsection{Analytic expressions for expected multiplicative shift}
	\label{sec:AnalyticDescription}
	For $1$-dimensional homology, the expected multiplicative shift for a true orientation $\phi$, sampling orientation $\psi$, true scaling factor $s$, sampling scaling factor $S$ and orientational variance $V=0$ is the minimum of the shift in the scaling direction and the shift orthogonal to the scaling direction
	\[
	\expP_{\psi,S}(\varphi, 0, s):=\min \left(\frac{S}{\sqrt{
			1 + (S^2 - 1)\cdot \cos^2(\varphi-\psi)
	}},
	\frac{sS}{\sqrt{
			1 + (S^2 - 1)\cdot \sin^2(\varphi-\psi)		
	}}\right).
	\]
	
	For non-zero orientational variance $V$, we have to fold this function:
	\[
	\expP_{\psi,S}(\varphi, V, s) =
	\operatorname{argmax}_x \frac{1}{\sqrt{V2\pi}} \int_{-\infty}^\infty e^{-t^2/(2V)}\delta_{x=\expP_{\psi,S}(\varphi+t, 0, s)}dt
	\]
	We can approximate this using the weighted integral of the peaks:
	\[
	\expP_{\psi,S}(\varphi, V, s) =
	\frac{1}{\sqrt{V2\pi}} \int_{-\infty}^\infty e^{-t^2/(2V)}\expP_{\psi,S}(\varphi +t, 0, s)dt.
	\]
	In the $0$-dimensional case, we do the analogue for the following expected shift function:
	\[
	\expP_{\psi,S}(\varphi, 0, s):=\min \left(\sqrt{S^2 \cos^2	 (\varphi-\psi)^2 + \sin^2 (\varphi-\psi)},
	s\right).
	\]
	\FloatBarrier
	\section{Selection of Probing Directions and Scaling Factors}
	\label{sec:SelectionProbingDirections}
	\begin{figure}[tb!]
		\begin{center}
			\begin{subfigure}{0.30\columnwidth}
				\includegraphics[width=\columnwidth]{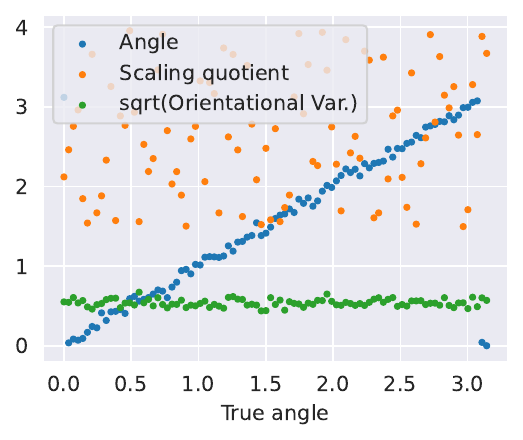}
			\end{subfigure}
			\begin{subfigure}{0.30\columnwidth}
				\includegraphics[width=\columnwidth]{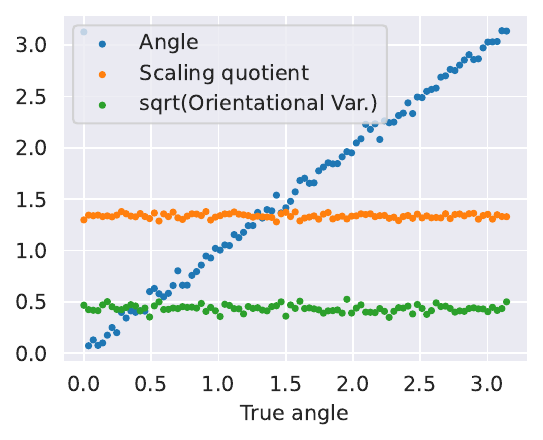}
			\end{subfigure}
			\begin{subfigure}{0.30\columnwidth}
				\includegraphics[width=\columnwidth]{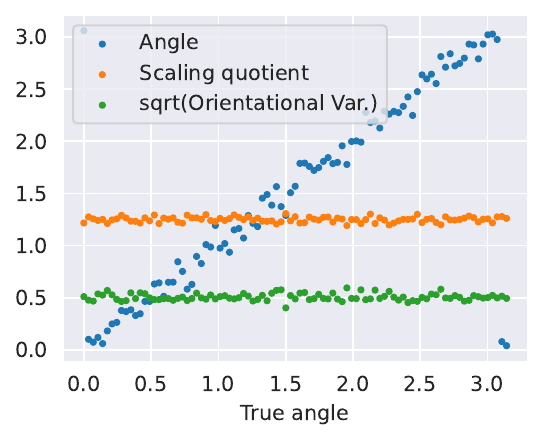}
			\end{subfigure}
			\begin{subfigure}{0.30\columnwidth}
				\includegraphics[width=\columnwidth]{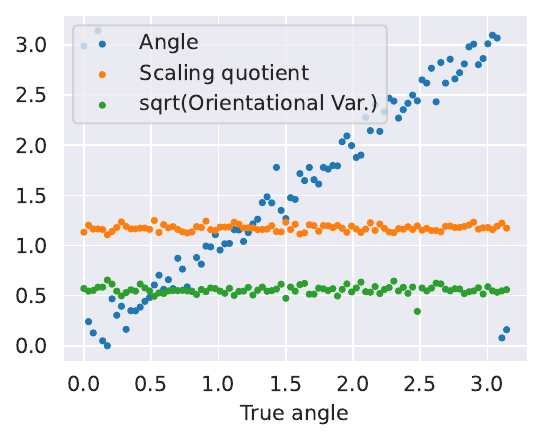}
			\end{subfigure}
			\begin{subfigure}{0.30\columnwidth}
				\includegraphics[width=\columnwidth]{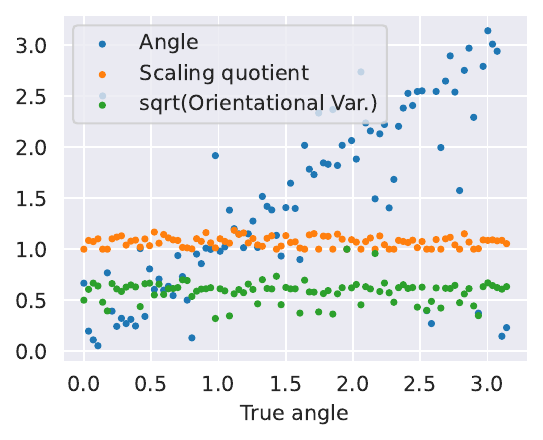}
			\end{subfigure}
			\begin{subfigure}{0.30\columnwidth}
				\includegraphics[width=\columnwidth]{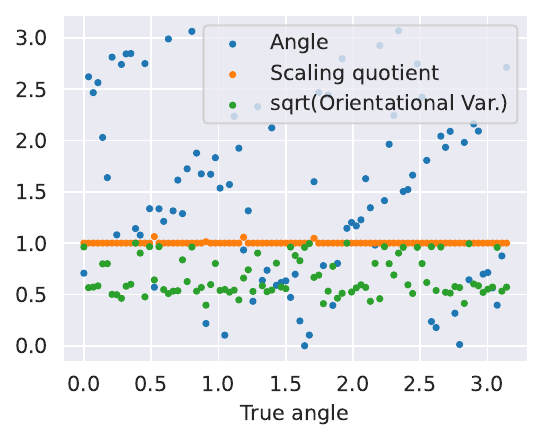}
			\end{subfigure}
			\caption{\textbf{Performance of \NIPH{} for different probing scaling factors and $8$ probing directions.} We have a ground truth orientational standard deviation of $0.5$, scaling factor of $1.5$ and varying angle. \emph{Top left:} Probing scaling factor $1.5$, \emph{Top centre:} Probing scaling factor $2.0$, \emph{Top right:} Probing scaling factor $2.5$, \emph{Bottom left:} Probing scaling factor $3.0$, \emph{Bottom centre:} Probing scaling factor $4.0$, \emph{Bottom right:} Probing scaling factor of $8.0$.
			}
			\label{fig:DifferentProbingScales}
		\end{center}
	\end{figure}
	\begin{figure}[tb!]
		\begin{center}
			\begin{subfigure}{0.30\columnwidth}
				\includegraphics[width=\columnwidth]{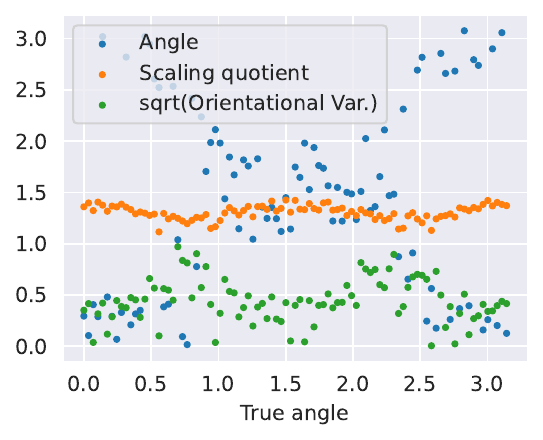}
			\end{subfigure}
			\begin{subfigure}{0.30\columnwidth}
				\includegraphics[width=\columnwidth]{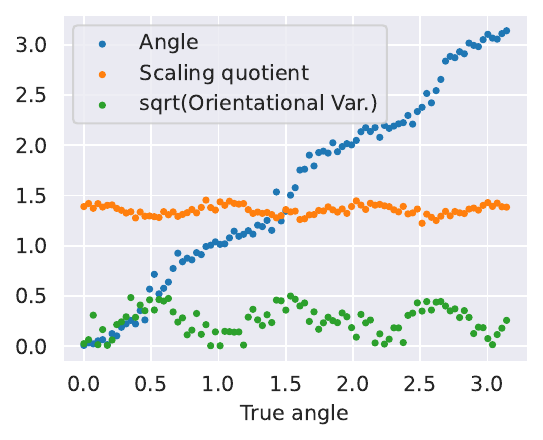}
			\end{subfigure}
			\begin{subfigure}{0.30\columnwidth}
				\includegraphics[width=\columnwidth]{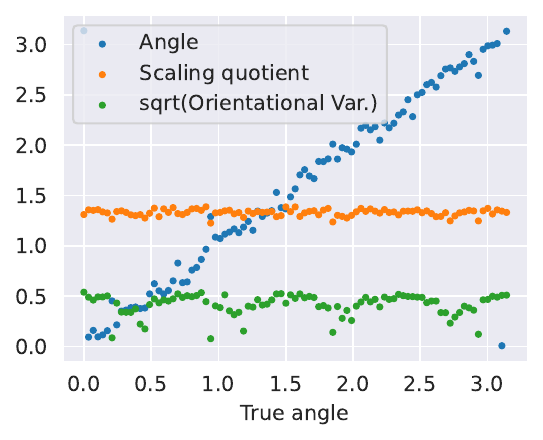}
			\end{subfigure}
			\begin{subfigure}{0.30\columnwidth}
				\includegraphics[width=\columnwidth]{figs/synth_var_phi_8angles.pdf}
			\end{subfigure}
			\begin{subfigure}{0.30\columnwidth}
				\includegraphics[width=\columnwidth]{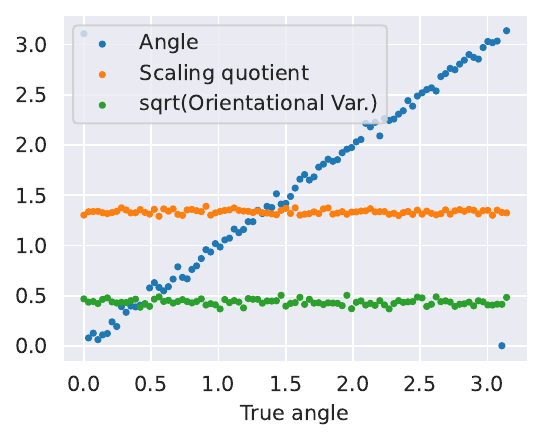}
			\end{subfigure}
			\begin{subfigure}{0.30\columnwidth}
				\includegraphics[width=\columnwidth]{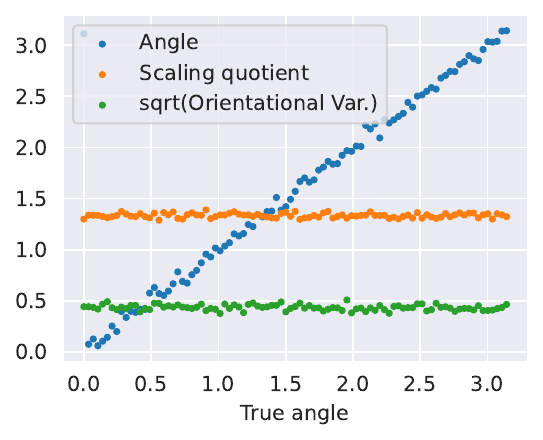}
			\end{subfigure}
			\caption{\textbf{Performance of \NIPH{} for different probing scaling factors and $8$ probing directions.} We have a ground truth orientational standard deviation of $0.5$, scaling factor of $1.5$ and varying angle. Numbers of probing angles from left to right \emph{first Row:} $2$--$3$--$4$, \emph{second Row:} $8$--$16$--$32$.
			}
			\label{fig:AngleCounts}
		\end{center}
	\end{figure}
	\begin{figure}[ht]
		\begin{center}
			\begin{tabular}{lrrr}
				\toprule
				Dataset&$\sqrt{\text{MSE}}$ $\varphi$& $\sqrt{\text{MSE}}$ scaling&$\sqrt{\text{MSE}}$ $\sqrt{V}$\\
				\midrule
				$\#2$ probing directions& $35^\circ$& $0.21$&$13^\circ$\\
				$\#3$ probing directions& $6.3^\circ$& $\mathbf{0.16}$&$16^\circ$\\
				$\#4$ probing directions& $4.7^\circ$ &$\mathbf{0.17}$&$7.7^\circ$\\
				$\#8$ probing directions& $2.9^\circ$& $\mathbf{0.17}$&$4.3^\circ$\\
				$\#16$ probing directions& $\mathbf{2.6^\circ}$& $\mathbf{0.17}$&$4.1^\circ$\\
				$\#32$ probing directions& $\mathbf{2.6^\circ}$& $\mathbf{0.17}$&$4.0^\circ$\\
				\midrule
				$1.5$ probing scaling factor& $3.0^\circ$& $1.4$&$3.5^\circ$\\
				$2$ probing scaling factor& $2.9^\circ$& $\mathbf{0.17}$&$4.3^\circ$\\
				$2.5$ probing scaling factor& $5.0^\circ$& $0.25$&$\mathbf{1.9^\circ}$\\
				$3$ probing scaling factor& $6.8^\circ$& $0.33$&$4.0^\circ$\\
				$4$ probing scaling factor& $22^\circ$& $0.43$&$7.9^\circ$\\
				$8$ probing scaling factor& $47^\circ$& $0.50$&$14^\circ$\\
				\midrule
				Combined scaling factors&$3.8^\circ$& $0.32$&$4.6^\circ$\\
				\midrule
				\bottomrule
			\end{tabular}
			\caption{\textbf{Quantitative performance of \NIPH{} using different sampling parameters}}
			\label{tab:parametersMSE}
		\end{center}
	\end{figure}
	\begin{figure}[tb!]
		\begin{center}
			\begin{subfigure}{0.50\columnwidth}
				\includegraphics[width=\columnwidth]{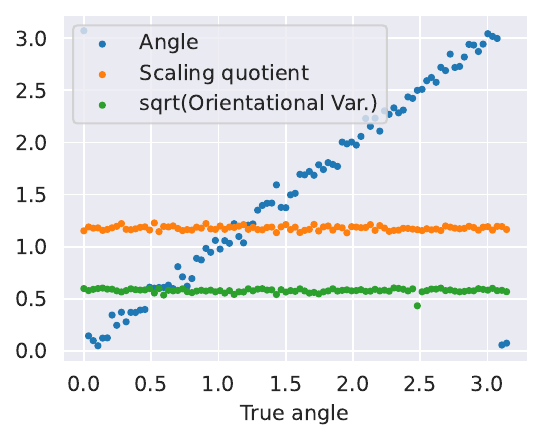}
			\end{subfigure}
			\caption{\textbf{Performance of \NIPH{} for a combination of the probing scaling factors 
					and $8$ probing directions.} We have a ground truth orientational standard deviation of $0.5$, scaling factor of $1.5$ and varying angle.
			}
			\label{fig:Combined Scales}
		\end{center}
	\end{figure}
	In this section, we will analyse how different parameter choices affect the performance of \NIPH{}.
	The presented form of \NIPH{} does not have many parameters, and mainly the choice of sampling directions and probing scaling factor has an impact on the result.
	In \Cref{fig:DifferentProbingScales}, we have repeated the experiment of \Cref{fig:SynthExpVaryingPhi} with $8$ probing angles and varying probing scaling factors.
	We make the following observations:
	\begin{enumerate}[i)]
		\item Picking a probing scaling factor that is not above the true scaling factor of the data set results in \NIPH{} failing to extract meaningful scaling information. However, the extraction of the orientational variance and the underlying orientation still performs well.
		\item Picking probing scaling factors that are significantly larger than the underlying scaling factor results in bad overall performance.
		\item There is a wide range of possible probing scaling directions for which \NIPH{} produces good results.
		\item Sampling with different probing scaling factors during one iteration of \NIPH{} produces good results comparable with the best individual parameter choice, see \Cref{fig:Combined Scales}. This solves the problem of choosing a fixed probing scaling factor in practice.
	\end{enumerate}
	In \Cref{fig:AngleCounts}, we have repeated the experiment of \Cref{fig:SynthExpVaryingPhi} with a varying number of probing angles and a fixed probing scaling direction of $2$.
	The quality of the approximation increases with the number of probing directions used.
	In our case, the the quality increase above $8$ scaling directions is only minuscule.
	In the case of a low number of sampling directions, the errors are periodic.
	We compare the root mean squared errors of the different parameter selections in \Cref{tab:parametersMSE}.
	
	\newpage
	\section{Figures}
	\FloatBarrier
	\begin{figure}[htb!]
		\begin{center}
			\begin{subfigure}{0.95\columnwidth}
				\includegraphics[width=\columnwidth]{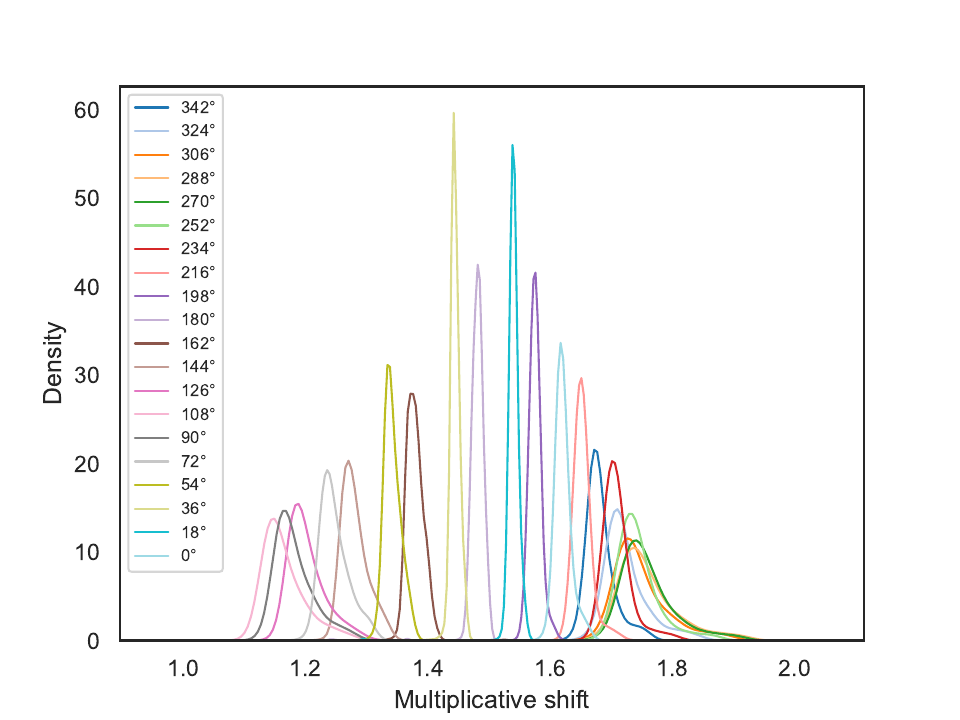}
			\end{subfigure}
			\begin{subfigure}{0.95\columnwidth}
				\includegraphics[width=\columnwidth]{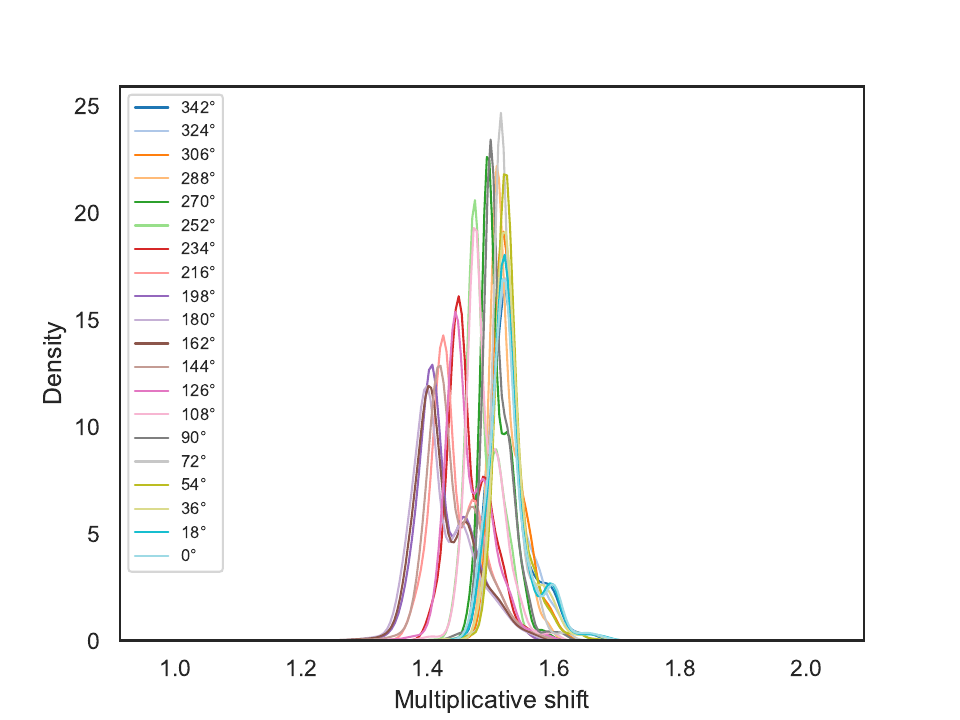}
			\end{subfigure}
			\caption{\textbf{Mult. Shift Diagrams used to construct \Cref{fig:RoadNetworks}}
				\emph{Top:} Residential Roads in Manhattan. \emph{Bottom:} Residential Roads in Tübingen.
			}
			\label{fig:RoadNetworksMS}
		\end{center}
	\end{figure}
%

\end{document}